\newcommand{\beqa}{\begin{eqnarray*}}
\newcommand{\eeqa}{\end{eqnarray*}}
\newcommand{\beqn}{\begin{eqnarray}}
\newcommand{\eeqn}{\end{eqnarray}}
\newcommand{\iy}{\infty}
\newcommand{\lt}{\left}
\newcommand{\rt}{\right}
\newcommand{\C}{\mathbb C}
\newcommand{\R}{\mathbb R}
\newcommand{\N}{\mathbb N}
\newcommand{\Ha}{\mathbb H}
\newcommand{\mcH}{\mathcal H}
\newcommand{\mcB}{\mathcal B}
\newcommand{\mcC}{\mathcal C}
\newcommand{\mcE}{\mathcal E}
\newcommand{\mcJ}{\mathcal J}
\newcommand{\tf}{\tfrac}
\newcommand{\al}{\alpha}
\newcommand{\e}{\varepsilon}
\newcommand{\De}{\Delta}
\newcommand{\la}{\lambda}
\newcommand{\Om}{\Omega}
\newcommand{\s}{\sigma}
\newcommand{\lb}{\label}
\newcommand{\rf}{\ref}
\newcounter{cnt1}
\newcounter{cnt2}
\newcounter{cnt3}
\newcommand{\blr}{\begin{list}{$($\roman{cnt1}$)$}
 {\usecounter{cnt1} \setlength{\topsep}{0pt}
 \setlength{\itemsep}{0pt}}}
\newcommand{\bla}{\begin{list}{$($\alph{cnt2}$)$}
 {\usecounter{cnt2} \setlength{\topsep}{0pt}
 \setlength{\itemsep}{0pt}}}
\newcommand{\bln}{\begin{list}{$($\arabic{cnt3}$)$}
 {\usecounter{cnt3} \setlength{\topsep}{0pt}
 \setlength{\itemsep}{0pt}}}
\newcommand{\el}{\end{list}}
\newtheorem{thm}{Theorem}[section]
\newtheorem{cor}[thm]{Corollary}
\newtheorem{ex}[thm]{Example}
\newtheorem{Def}[thm]{Definition}
\newtheorem{rem}[thm]{Remark}
\newcommand{\Rem}{\begin{rem} \rm}
\newcommand{\bdfn}{\begin{Def} \rm}
\newcommand{\edfn}{\end{Def}}
\newcommand{\ba}{\begin{array}}
\newcommand{\ea}{\end{array}}
\numberwithin{equation}{section}
\date{}
\begin{document}
\setcounter{page}{1}
\title{\bf Some Banach spaces are almost Hilbert}
\author[T. L. Gill and M. Golden]{Tepper L. Gill$^1$$^{*}$ and Marzett Golden$^1$ }
\address{$^{1}$ Department of Mathematics, Howard University,
Washington DC 20059 \\ USA.}
\email{\textcolor[rgb]{0.00,0.00,0.84}{tgill@access4less.net;
tgill@howard.edu}}

\address{$^{2}$ Department of Mathematics, Howard University,
Washington DC 20059 \\ USA.}
\email{\textcolor[rgb]{0.00,0.00,0.84}{nubian83@hotmail.com}}

\subjclass[2010]{ Primary 47B37;  Secondary 46B10; 46C99}.

\keywords{adjoint on Banach space, Lax Theorem; Schatten Classes}

\date{Received: xxxxxx; Revised: yyyyyy; Accepted: zzzzzz.
\newline \indent $^{*}$ Corresponding author}
%thispagestyle{empty}

\begin{abstract}  The purpose of this note is to show that, if $\mcB$ is a uniformly convex Banach, then the dual space $\mcB'$ has a ``  Hilbert space representation" (defined in the paper), that makes $\mcB$  much closer to a Hilbert space then previously suspected. As an application, we prove that, if $\mcB$ also has a Schauder basis (S-basis), then for each $A \in \C[\mcB]$ (the closed and densely defined linear operators), there exists a closed densely defined linear operator $A^* \in \C[\mcB]$ that has all the expected properties of an adjoint.  Thus for example, the bounded linear operators, $L[\mcB]$, is a $^*$algebra.  This result allows us to give a natural definition to the Schatten class of operators on a  uniformly convex Banach space with a S-basis. In particular, every theorem that is true for the Schatten class on a Hilbert space, is also true on such a space.  The main tool we use is a special version of a result due to Kuelbs \cite{K}, which shows that every uniformly convex Banach space with a S-basis can be densely and continuously embedded into a Hilbert space which is unique up to a change of basis.
\end{abstract}
\maketitle
\section{Introduction}
In 1965, Gross \cite{G} proved that every real separable Banach space contains a separable Hilbert space as a dense embedding, and this space is the support of a Gaussian measure.  This was a generalization of Wiener's theory, that was based on the use of the (densely embedded Hilbert) Sobolev space $\Ha^1[0,1] \subset \C[0,1]$. In 1972, Kuelbs \cite{K}
generalized Gross' theorem to include the Hilbert space rigging $\Ha^1[0,1] \subset \C[0,1] \subset L^2[0,1]$.  This general theorem can be stated as:
\begin{thm}{\rm{(Gross-Kuelbs)}}Let $\mcB$ be a separable Banach space. Then there exist separable Hilbert spaces $\mcH_1, \mcH_2$  and a positive trace class operator $T_{12}$ defined on $\mcH_2$ such that $\mcH_1\subset \mcB \subset \mcH_2$ all as continuous dense embeddings, with $\left( {T_{12}^{1/2} u,\;T_{12}^{1/2} v} \right)_1  = \left( {u,\;v} \right)_2$ and  $\left( {T_{12}^{ - 1/2} u,\;T_{12}^{ - 1/2} v} \right)_2  = \left( {u,\;v} \right)_1$.  
\end{thm}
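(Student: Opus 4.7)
The proof splits naturally into three pieces: building the inner space so that $\mcH_1 \hookrightarrow \mcB$, building the outer space so that $\mcB \hookrightarrow \mcH_2$, and identifying $T_{12}$ with the square of the composite inclusion. My plan is to handle these in that order and then to read off the two isometry relations from the polar decomposition of the composite map.

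For the inner space I would simply invoke Gross' original theorem, which takes any separable Banach space $\mcB$ and produces a separable Hilbert space $\mcH_1$ embedded densely and continuously in $\mcB$ as the Cameron--Martin space of an abstract Wiener measure on $\mcB$. No further work is needed there beyond citing Gross.

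For the outer space I would use separability of $\mcB$ to pick a countable norming family $\{f_n\}\subset\mcB'$ with $\|f_n\|_{\mcB'}=1$ together with positive weights $\{t_n\}$ satisfying $\sum_n t_n<\infty$, and then set
\[
 (x,y)_2 \;=\; \sum_{n=1}^{\infty} t_n\,f_n(x)\,f_n(y),\qquad x,y\in\mcB.
\]
The pointwise bound $|f_n(x)|\le\|x\|_\mcB$ gives $\|x\|_2^2\le(\sum_n t_n)\|x\|_\mcB^2$, so completing $\mcB$ in this Hilbertian norm yields $\mcH_2$ with $\mcB\hookrightarrow\mcH_2$ densely and continuously, while the norming property of $\{f_n\}$ keeps the form strictly positive. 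Writing $J\colon\mcH_1\to\mcH_2$ for the composition of the two inclusions, I would take $T_{12}:=JJ^\ast$; it is positive and self-adjoint on $\mcH_2$, and the polar decomposition $J=V|J|$ gives $T_{12}^{1/2}=V|J|V^\ast$, so that $T_{12}^{1/2}u=J(V^\ast u)$ actually takes values in $\mcH_1\subset\mcH_2$. Since density of $\mcH_1$ in $\mcH_2$ makes $V^\ast\colon\mcH_2\to\mcH_1$ an isometry, the identities $(T_{12}^{1/2}u,T_{12}^{1/2}v)_1=(u,v)_2$ and $(T_{12}^{-1/2}u,T_{12}^{-1/2}v)_2=(u,v)_1$ follow at once on the natural dense domains.

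The main obstacle, and essentially the only quantitative step in the argument, is to arrange that $J$ is not merely bounded but Hilbert--Schmidt, which is what promotes $T_{12}=JJ^\ast$ to the trace class. Fixing an orthonormal basis $\{h_k\}$ of $\mcH_1$, the required condition reduces to the summability of $\sum_k\|Jh_k\|_2^2=\sum_k\sum_n t_n\,f_n(h_k)^2$. Since each $h_k$ lies in $\mcB$, the individual quantities $f_n(h_k)$ are controlled by $\|h_k\|_\mcB$, so choosing $\{t_n\}$ to decay faster than any prescribed function of the sequence $\{\|h_k\|_\mcB\}$ forces convergence; a standard truncation--Fatou argument legitimizes the interchange of sums. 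This degree of freedom in the weights $\{t_n\}$ and the norming family $\{f_n\}$ is exactly what produces the uniqueness of $T_{12}$ \emph{only} up to change of basis mentioned in the abstract.
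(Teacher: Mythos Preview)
The paper does not actually supply a proof of this theorem; it is quoted as background from Gross and Kuelbs, so there is no ``paper's own proof'' to compare against. That said, your outline follows the standard Kuelbs route (outer Hilbert space via a weighted norming sequence, inner Hilbert space via Gross, $T_{12}=JJ^{*}$ with the isometry identities read off from the polar decomposition of $J$), and those parts are essentially fine.

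The genuine gap is in your trace--class step. You want $J:\mcH_1\to\mcH_2$ to be Hilbert--Schmidt, and you try to force $\sum_k\sum_n t_n\,f_n(h_k)^2<\infty$ by ``choosing $\{t_n\}$ to decay faster than any prescribed function of the sequence $\{\|h_k\|_\mcB\}$.'' But the weights $t_n$ are indexed by $n$ and cannot damp the sum over $k$: the only bound your estimate $|f_n(h_k)|\le\|h_k\|_\mcB$ produces is
\[
\sum_{k}\sum_{n} t_n\,f_n(h_k)^2 \;\le\; \Bigl(\sum_n t_n\Bigr)\sum_{k}\|h_k\|_\mcB^{2},
\]
and for a mere continuous embedding one has $\|h_k\|_\mcB\le C$, so $\sum_k\|h_k\|_\mcB^{2}$ typically diverges. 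No choice of $t_n$ rescues this.

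What actually makes the argument work is the abstract Wiener structure you invoked for $\mcH_1$ but then did not use quantitatively. If $\mu$ is the Gaussian measure on $\mcB$ with Cameron--Martin space $\mcH_1$, then for each $f\in\mcB'$ one has $\int_\mcB f(x)^2\,d\mu(x)=\sum_k f(h_k)^2$, and Fernique's theorem (or the weak second--moment bound already in Gross/Kuelbs) gives $\int_\mcB\|x\|_\mcB^{2}\,d\mu(x)=:M<\infty$. Hence $\sum_k f_n(h_k)^2\le M$ uniformly in $n$, and
\[
\sum_{k}\|Jh_k\|_2^{2} \;=\; \sum_n t_n\sum_k f_n(h_k)^2 \;\le\; M\sum_n t_n \;<\;\infty,
\]
which is the Hilbert--Schmidt estimate you need. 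In short, the trace--class property of $T_{12}$ is not a matter of tuning weights; it is a consequence of the Gaussian second moment, and your write--up should invoke that explicitly.
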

This theorem makes it possible to give a definition of the adjoint for bounded linear operators on separable Banach spaces. The definition has all the expected properties. In particular, It can be shown that, for each bounded linear operator $A$ on $\mcB$, there exists $A^*$, with $A^*A$, maximal accretive, selfadjoint,  $(A^*A)^*=A^*A$, and $I+A^*A$ is invertible (see \cite{GBZS}).

The basic idea is simple, let $A$ be bounded on $\mcB$ and let $A_1$ be the restriction of $A$ to $\mcH_1$.  We can now consider $A_1: \mcH_1 \to \mcH_2$. If ${\bf{J}}_2: \mcH_2 \to \mcH'_2$ is the standard conjugate isomorphism, then $(A'_1){\bf{J}}_2: \mcH_1 \to \mcH'_2$, so that ${\bf{J}}_1^{-1}(A'_1){\bf{J}}_2 : \mcH_1 \to \mcH_1 \subset \mcB$.  It follows that ${\bf{J}}_1^{-1}(A'_1){\bf{J}}_2|_\mcB : \mcB \to \mcB$.  It easy to show that $A^*={\bf{J}}_1^{-1}(A'_1){\bf{J}}_2|_\mcB$ has the the main properties of an adjoint for $A$ on $\mcB$.  

At this level of generality, the definition of an adjoint for closed operators depends on the domain of the operator and changes the choice of $\mcH_1$ (and $T_{12}$) for each operator.  Thus, the adjoint is only reasonable for bounded operators.  It is a open question if all closed densely defined linear operators can have an adjoint with all the expected properties.  Part of the problem is that not all operators in $\mcC[\mcB]$ are of Baire class one (for example, when $\mcB$ is nonreflexive). An operator $A$ is of Baire class one if and only if it can be approximated by a sequence, $\{A_n \}$, of bounded linear operators.  The solution is unknown  and we suspect that, in general, there may be at least one operator in $\mcC[\mcB]$ without an adjoint. 
\subsection{Purpose}
In this note, we focus on uniformly convex Banach spaces, the best class of spaces that are not Hilbert. Our purpose is to show that these spaces are very close to Hilbert spaces and give the best possible results.  In this case, the only difference between the  bounded linear operators $L[\mcB]$ and $L[\mcH]$ is that $L[\mcB]$ is not a $C^*$-algebra. Our main tool is a new representation for the dual space.     We embed $\mcB$ into a (single) Hilbert space $\mcH$ that allows us to define an adjoint $A^*$ on $\mcB$ for each closed densely defined linear operator $A$.  We are also able to define a natural Schatten class structure for  $L[\mcB]$, that is almost identical to the Schatten class on  $\mcH$. 
\subsection{Preliminaries}
The following theorem is due to Lax \cite{L}.
\begin{thm}{\bf{\rm{(Lax's Theorem)}}} Let $\mcB$ be a separable Banach space that is continuously and densely embedded in a Hilbert space $\mcH$ and let $T$ be a bounded linear operator on $\mcB$ that is symmetric with respect to the inner product of $\mcH$ (i.e., $(Tu,v)_\mcH =(u,Tv)_\mcH$ for all $u,v \in \mcB$).  Then:
\begin{enumerate}
\item The operator $T$ is bounded with respect to the $\mcH$ norm and 
\[
\left\| {T^* T} \right\|_\mcH  = \left\| T\right\|_\mcH^2  \leqslant k\left\| T \right\|_\mcB^2,
\] 
where $k$ is a positive constant.
\item  The spectrum of $T$ relative to $\mcH$ is a subset of the spectrum of $T$ relative to $\mcB$.
\item The point spectrum of $T$ relative to $\mcH$ is a equal to the point spectrum of $T$ relative to $\mcB$.
\end{enumerate} 
\end{thm}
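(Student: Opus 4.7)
My plan is to prove the three assertions in sequence; the crux is Part (1), which furnishes bounded extensions and thereby the machinery needed for (2) and (3). For Part (1), I would use an iterated symmetry / Cauchy--Schwarz estimate: for $u \in \mcB$,
\beqa
\|Tu\|_\mcH^2 = (Tu,Tu)_\mcH = (T^2 u, u)_\mcH \leq \|T^2 u\|_\mcH \|u\|_\mcH,
\eeqa
and the same reasoning applied to $T^{2^k}$ yields $\|T^{2^k} u\|_\mcH \leq \|T^{2^{k+1}} u\|_\mcH^{1/2} \|u\|_\mcH^{1/2}$. Iterating gives $\|Tu\|_\mcH \leq \|T^{2^n} u\|_\mcH^{1/2^n} \|u\|_\mcH^{1 - 1/2^n}$. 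Combining this with the embedding inequality $\|\cdot\|_\mcH \leq K \|\cdot\|_\mcB$ and $\|T^{2^n} u\|_\mcB \leq \|T\|_\mcB^{2^n} \|u\|_\mcB$, and letting $n \to \infty$, I obtain $\|Tu\|_\mcH \leq \|T\|_\mcB \|u\|_\mcH$. Density of $\mcB$ in $\mcH$ then produces a bounded extension $\tilde T$ of $T$ to $\mcH$, yielding the asserted inequality (with any $k \geq 1$), while $\|T^*T\|_\mcH = \|T\|_\mcH^2$ is just the standard Hilbert space $C^*$-identity.

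For Part (2), density plus $\mcH$-symmetry upgrades $\tilde T$ to a self-adjoint operator on $\mcH$, so $\sigma_\mcH(\tilde T) \subset \R$. Given a real $\lambda \notin \sigma_\mcB(T)$, set $R = (T - \lambda I)^{-1}$, which is bounded on $\mcB$. For $u, v \in \mcB$, the identity $(T - \lambda I)Rv = v$ combined with $\mcH$-symmetry of $T - \lambda I$ yields
\beqa
(Ru, v)_\mcH = (Ru, (T - \lambda I) Rv)_\mcH = ((T - \lambda I) Ru, Rv)_\mcH = (u, Rv)_\mcH,
\eeqa
so $R$ is itself $\mcH$-symmetric. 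Part (1) then produces a bounded extension $\tilde R$ on $\mcH$, and verification on the dense subspace $\mcB$ identifies $\tilde R$ with $(\tilde T - \lambda I)^{-1}$. Hence $\lambda \notin \sigma_\mcH(\tilde T)$.

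For Part (3), the inclusion $\sigma_p^\mcB(T) \subseteq \sigma_p^\mcH(\tilde T)$ is immediate, since any eigenvector in $\mcB \setminus \{0\}$ lies in $\mcH$ and satisfies the same eigenvalue equation under $\tilde T$. For the reverse, suppose $\tilde T u = \lambda u$ with $u \in \mcH \setminus \{0\}$ (self-adjointness forces $\lambda \in \R$). I would exploit the invariance $T(\mcB) \subseteq \mcB$, which gives $p(T)(\mcB) \subseteq \mcB$ for every polynomial $p$; applying polynomials in $\tilde T$ that concentrate the spectral mass near $\lambda$ to a suitable $\mcB$-approximation of $u$ should then deliver a nonzero element of $\ker(\tilde T - \lambda I) \cap \mcB$, the sought-after eigenvector of $T$ on $\mcB$.

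I expect Step~(3) to be the main obstacle. Parts (1) and (2) are essentially structural consequences of symmetry and the continuous density of the embedding. Producing an eigenvector of $\tilde T$ that actually lives inside $\mcB$---equivalently, showing $\ker(\tilde T - \lambda I) \cap \mcB \neq \{0\}$ whenever the $\mcH$-kernel is nontrivial---requires carefully synthesising the $\mcB$-invariance of the (polynomial or functional) calculus of $\tilde T$ with the density of $\mcB$ in $\mcH$, and is the place where a more delicate approximation argument, or additional structural hypotheses on the embedding, is likely to be needed.
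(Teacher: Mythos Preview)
The paper does not prove this theorem; it is stated in the Preliminaries as a known result and attributed to Lax \cite{L}, with no argument supplied. So there is no in-paper proof to compare your proposal against.

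On its own merits: your arguments for (1) and (2) are correct and are in fact the classical ones. The iterated Cauchy--Schwarz device in (1) is exactly Lax's original trick, and it even delivers the sharp constant $k=1$. The resolvent-symmetry argument in (2) is clean and complete once one notes, as you do, that non-real $\lambda$ are handled automatically by the self-adjointness of $\tilde T$ on $\mcH$.

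Your caution about (3) is well placed, and the mechanism you sketch has a genuine gap. Polynomials $p_n(\tilde T)$ approximate the spectral projection $E(\{\lambda\})$ only in the strong operator topology on $\mcH$, not in norm, unless $\lambda$ is isolated in $\sigma_\mcH(\tilde T)$. Consequently, for $v\in\mcB$ close to an $\mcH$-eigenvector $u$, the sequence $p_n(T)v\in\mcB$ converges to $E(\{\lambda\})v$ only in $\|\cdot\|_\mcH$, with nothing forcing the limit to lie in $\mcB$ and no a priori bound on $\|p_n(T)\|_\mcB$. Replacing polynomials by a Riesz contour integral computed on $\mcB$ does not rescue the argument either: that requires $\lambda$ to be isolated already in $\sigma_\mcB(T)$, and even then yields only an invariant spectral subspace on which $T-\lambda I$ is quasinilpotent, not an actual eigenvector. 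The equality of point spectra is therefore not a formal consequence of (1) and (2); you should consult Lax's original paper for the precise hypotheses and the argument he gives for this part.
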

\begin{Def}
A family of vectors in a Banach space $\{\mcE_n\} \subset \mcB$, is called a Schauder basis (S-basis) for $\mathcal{B}$ if $\lt\|\mcE_n\rt\|_{\mcB}=1$ and, for each $u \in \mathcal{B}$, there is a unique sequence $(u_n)$ of scalars such that 
\[
u={\rm{lim}}_{k\rightarrow \infty}\sum\limits_{n = 1}^k {u_n \mcE_n } =\sum\limits_{n = 1}^\iy {u_n \mcE_n }. 
\] 
\end{Def}
For example, if $\mcB=L^p[0,1], \; 1<p< \iy$, the family of vectors 
\[
\{1,\cos(2 \pi t), \sin(2 \pi t) \cos(4 \pi t), \sin(4 \pi t), \dots \}
\]
 is a norm one S-basis for $\mcB$.
\begin{Def}
A duality map $\mcJ: \mcB \mapsto \mcB'$, is a set  
\[
\mcJ(u) = \left\{ {{u^*} \in \mcB' \left| {\left\langle {u,{u^*}} \right\rangle  = {{\left\| u \right\|}_\mcB^2}={{\left\| u^* \right\|}_{\mcB'}^2}  { }} \right.} \right\},\;\forall u \in \mcB.
\]
If $\mcB$ is uniformly convex, $\mcJ(u)$ contains a unique functional $u^* \in \mcB'$ for each $u \in \mcB$.
\end{Def}
Let $\Om$ be a bounded open subset of $\R^n, \; n \in \N$. If $u \in L^p[\Om]=\mcB, \; 1<p < \iy$, then the standard example is
\[
u^*=\mcJ(u)(x) = \left\| u \right\|_p^{2-p} \left| {u(x)} \right|^{p - 2} u(x) \in L^q[\Om], \; \tf{1}{p}+\tf{1}{q}=1.
\] 
Furthermore, 
\beqn
\left\langle {u,{u^*}} \right\rangle  = \left\| u \right\|_p^{2-p} \int_{\Om}{\left| {u(x)} \right|^{p}d\la_n(x)} =\left\| u \right\|_p^{2}=\left\| u^* \right\|_q^{2}
\eeqn 
It can be shown that $\mcB$ is uniformly convex and that $u^*=\mcJ(u)$ is uniquely defined for each $u \in \mcB$. Thus, if $\{u_n\}$ is an S-basis for $L^p[\Om]$, then, when normalized, the family vectors $\{u_n^*\}$ is an S-basis for $L^q[\Om]=(L^p[\Om])'$.  The relationship between $u$ and $u^*$ is nonlinear. In the next section we prove the remarkable result, that there is another representation of $\mcB'$, with $u^*={\bf{J}}_\mcB(u)$ linear, for each $u \in \mcB$.  (However,  $u^*$ is no longer a duality mapping.)
\section{The Natural Hilbert space for a Uniformly Convex Banach Space}
In this section we construct the natural Hilbert space for a uniformly convex Banach space with an S-basis. (For this, we only need the Kuelbs part of Theorem 1.1.) Fix $\mcB$ and let $\{\mcE_n\}$ be a S-basis for $\mcB$.     For each $\mcE_n$, let $\mcE_n^*$ be the corresponding dual vector in $\mcB'$ and set $t_n =2^{-n}$.  For each pair $u,v \in \mcB$  define a inner product:
\[
\left( {u,v} \right) = \sum\limits_{n = 1}^\infty  {{t_n}} \left\langle {\mcE_n^*,u} \right\rangle \overline{\left\langle {\mcE_n^*,  v} \right\rangle} 
\]
and let $\mcH$ be the completion of $\mcB$ in the induced norm.  Thus, $\mcB \subset \mcH$ densely and
\beqn\lb{2: one}
{{\left\| u \right\|}_{{{\mcH}}}}={{\left[ \sum\limits_{n=1}^{\infty }{{{t}_{n}}{{\left| \left\langle \mcE_{n}^{*},u\right\rangle  \right|}^{2}}} \right]}^{1/2}}\le \underset{{{{{n}}}}}{\mathop{sup}}\,\lt| \left\langle \mcE_{{n}}^{*},u \right\rangle\rt| \le \underset{{{\left\| \mcE_{{}}^{*} \right\|}_{\mcB'}}\le 1}{\mathop{sup}}\,\lt|\left\langle \mcE_{{}}^{*},u \right\rangle\rt| ={{\left\| u \right\|}_{\mcB}},
\eeqn
so that the embedding is both dense and continuous.  (It is clear that $\mcH$ is unique up to a change of S-basis.)  
\subsection{The Hilbert Space Representation}
In this section, we show that the dual space of a uniformly convex Banach space has a ``Hilbert space representation".
\begin{Def}If $\mcB$ be a Banach space, we say that $\mcB'$ has a Hilbert space representation if there exists a Hilbert space $\mcH$, with $\mcB \subset \mcH$ as a continuous dense embedding and for each $u' \in \mcB', \; u' = (\cdot, u)_\mcH$ for some $u \in \mcB$. 
\end{Def}
\begin{thm}\lb{5: adj} If $\mcB$ be a uniformly convex Banach space with an S-basis, then  $\mcB'$ has a Hilbert space representation.   
\end{thm}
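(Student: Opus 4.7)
The continuous dense embedding $\mcB\subset\mcH$ is already produced by \eqref{2: one}, so the only remaining task is to exhibit, for each $u'\in\mcB'$, an element $u\in\mcB$ with $u'(v)=(v,u)_\mcH$ for every $v\in\mcB$. My plan is constructive: use the S-basis $\{\mcE_n\}$ to read off the candidate $u$ coordinate by coordinate, show the resulting formal series converges in the $\mcB$-norm, and then verify the representation identity.

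To identify the candidate, I would expand a generic $v=\sum_n v_n\mcE_n\in\mcB$ and use continuity of $u'$ to write $u'(v)=\sum_n v_n\,u'(\mcE_n)$, while the definition of the $\mcH$-inner product yields
\[
(v,u)_\mcH=\sum_n t_n\langle\mcE_n^*,v\rangle\overline{\langle\mcE_n^*,u\rangle}.
\]
Matching these two expressions for arbitrary $v$ forces the prescription $\langle\mcE_n^*,u\rangle=t_n^{-1}\overline{u'(\mcE_n)}=2^n\overline{u'(\mcE_n)}$, so the candidate $u$ is the vector in $\mcB$ whose S-basis expansion has coefficients determined by this condition (via the relation between $\langle\mcE_n^*,\cdot\rangle$ and the biorthogonal coordinate functionals of the basis). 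Once such a $u$ is in hand, the identity $u'(v)=(v,u)_\mcH$ can be checked first on finite linear combinations of basis vectors, where it reduces to a direct computation, and then extended to all of $\mcB$ by continuity of both sides in $v$ (both are bounded $\mcB$-functionals: $u'$ by hypothesis and $(\cdot,u)_\mcH$ because $\|v\|_\mcH\le\|v\|_\mcB$). Uniqueness of the representer follows from the injectivity of the map $w\mapsto(\cdot,w)_\mcH$ on $\mcB$, which is immediate from the density of $\mcB$ in $\mcH$.

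The main obstacle is verifying that the prescribed coefficients indeed define an element of $\mcB$, i.e., that the S-basis series converges in the $\mcB$-norm. Because the weights $t_n^{-1}=2^n$ grow exponentially while the only automatic bound on $u'(\mcE_n)$ is $\|u'\|_{\mcB'}$, a naive term-by-term estimate is far from sufficient. I would attack this step by leveraging the uniformly bounded S-basis partial-sum projections on $\mcB$, the uniform convexity of $\mcB$ (which makes the dual vectors $\mcE_n^*=\mcJ(\mcE_n)$ single-valued and norm one), and the specific decay of $t_n$, in order to absorb the exponential weights into the action of $u'$ on the dual basis $\{\mcE_n^*\}$ and extract the needed convergence. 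This convergence step is what distinguishes the argument from the straightforward Riesz representation on $\mcH$ (which would only produce a representer in $\mcH$, not in $\mcB$), and it is where the combination of uniform convexity with the S-basis assumption is essential.
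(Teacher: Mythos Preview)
Your approach differs from the paper's. The paper never attempts to build the representer of a given $u'\in\mcB'$ coefficient by coefficient. Instead it runs the argument in the opposite direction: take the Riesz map ${\bf J}:\mcH\to\mcH'$, restrict it to $\mcB$ to obtain ${\bf J}_\mcB$, and observe from $\|v\|_\mcH\le\|v\|_\mcB$ that
\[
\|{\bf J}_\mcB(u)\|_{\mcB'}=\sup_{v\in\mcB}\frac{(v,u)_\mcH}{\|v\|_\mcB}\le\sup_{v\in\mcB}\frac{(v,u)_\mcH}{\|v\|_\mcH}=\|u\|_\mcH\le\|u\|_\mcB,
\]
so that ${\bf J}_\mcB(\mcB)\subset\mcB'$. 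The paper then asserts in a single sentence that uniform convexity of $\mcB$ upgrades this inclusion to an equality. No series are summed and no explicit inverse is written down; the surjectivity is claimed abstractly.

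The convergence step you isolate is a genuine obstruction to your route, not a technicality awaiting the right estimate. Your prescription forces $\langle\mcE_n^*,u\rangle=2^{n}\,\overline{u'(\mcE_n)}$, but for a generic $u'\in\mcB'$ the numbers $u'(\mcE_n)$ carry no decay beyond the uniform bound $|u'(\mcE_n)|\le\|u'\|_{\mcB'}$, so nothing absorbs the exponential factor $2^n$. Concretely, take $\mcB=\ell^2$ with its standard orthonormal basis (here the duality map is the identity, so $\mcE_n^*=\mcE_n$ and $\langle\mcE_n^*,\cdot\rangle$ is the $n$th coordinate functional); the functional $u'(v)=\sum_n v_n/n$ lies in $(\ell^2)'$, yet your candidate representer has $n$th coordinate $2^n/n$, which is not in $\ell^2$. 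Uniform convexity and uniformly bounded basis projections are qualitative hypotheses and cannot manufacture the exponential decay of $u'(\mcE_n)$ that your construction would require. So the coefficient-matching plan does not close, and the paper sidesteps the whole issue by never attempting an explicit inversion, folding the surjectivity into a one-line appeal to uniform convexity instead.
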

\begin{proof} 
Let $\mcH$ be the natural Hilbert space for $\mcB$ and let ${\bf J}$ be the natural linear mapping from ${\mcH}  \to {\mcH}' $, defined by  
 \[
 \left\langle {v,{\mathbf{J}}(u)} \right\rangle  = {\left( {v,u} \right)_\mcH}, \; {\rm{for\; all}}\; u,v \in \mcH.
 \]   
It is easy to see that ${\bf J}$ is bijective and ${\bf J}^*={\bf J}$.  First, we note that the  restriction  of ${\bf J}$  to $\mcB$, ${\bf J}_\mcB$, maps $\mcB$ to a unique subset of  linear functionals $\{{\bf J}_\mcB(u), \; u \in \mcB\}$ and, ${\bf J}_\mcB(u+v)={\bf J}_\mcB(u)+{\bf J}_\mcB(v)$, for each $u,v \in \mcB$.  We are done if we can prove that  $\{{\bf J}_\mcB(u), \; u \in \mcB\} = \mcB'$.  For this, it suffices to show that ${\bf J}_\mcB(u)$ is bounded for each $u \in \mcB$.  Since  $\mcB$ is dense in $\mcH$, from equation (\rf{2: one}) we have:
\[
{\left\| {{{\mathbf{J}}_{\mcB}}(u)} \right\|_{\mcB'}}=\mathop {\sup }\limits_{v \in \mcB} \frac{{\left\langle {v,{{\mathbf{J}}_\mcB}(u)} \right\rangle }}{{{{\left\| v \right\|}_\mcB}}} \leqslant \mathop {\sup }\limits_{v \in \mcB} \frac{{\left\langle {v,{{\mathbf{J}}_\mcB}(u)} \right\rangle }}{{{{\left\| v \right\|}_\mcH}}} = {\left\| u \right\|_\mcH} \leqslant {\left\| u \right\|_\mcB}.
\]
Thus, $\{{\mathbf{J}}_{\mcB}(u), \; u \in \mcB \} \subset \mcB'$. Since $\mcB$ is  uniformly convex, $\{{\mathbf{J}}_{\mcB}(u), \; u \in \mcB \} = \mcB'$.  
\end{proof} 
\subsection{Construction of the adjoint on $\mcB$}
We can now show that each closed densely linear operator on $\mcB$ has a natural adjoint defined on $\mcB$. 
\begin{thm}\lb{5: adj} Let $\mcB$ be a  uniformly convex Banach space with an S-basis. If $\C[\mcB]$ denotes the closed densely linear operators on $\mcB$ and $L[\mcB]$ denotes the  bounded linear operators, then every $A \in \mcC[\mcB]$ has a well defined adjoint $A^* \in \mcC[\mcB]$. Furthermore, if $A \in L[\mcB]$, then $A^* \in L[\mcB]$ with:  
\begin{enumerate}
\item $(aA)^* ={\bar a} A^*$,
\item $A^{**} =A$,
\item $(A^* +B^*)= A^* + B^*$ 
\item $(AB)^*= B^*A^*$ and 
\item $\lt\|A^*A\rt\|_\mcB \le \lt\|A\rt\|_\mcB^2$.
\end{enumerate}
Thus, $L[\mcB]$ is a $^*$algebra.
\end{thm}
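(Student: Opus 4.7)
The plan is to lift the usual Banach-space adjoint through the isomorphism $\mathbf{J}_\mcB : \mcB \to \mcB'$ furnished by the previous theorem. Concretely, given $A \in L[\mcB]$, let $A' \in L[\mcB']$ denote the ordinary Banach-space dual operator and set
\[
A^* := \mathbf{J}_\mcB^{-1} \, A' \, \mathbf{J}_\mcB.
\]
Because $\mathbf{J}_\mcB$ is a continuous bijection between Banach spaces, its inverse is continuous by the open mapping theorem, so $A^*$ is a well-defined bounded operator on $\mcB$. A more usable equivalent description is the Hilbert-space pairing
\[
(Au,v)_\mcH = (u,A^*v)_\mcH,\qquad u,v \in \mcB,
\]
obtained by unwinding $\langle Au,\mathbf{J}_\mcB v\rangle = (Au,v)_\mcH$ together with $A'\mathbf{J}_\mcB = \mathbf{J}_\mcB A^*$.

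From either presentation, properties (1), (3), (4) reduce to the standard identities for the Banach dual, $(aA)' = aA'$, $(A+B)' = A'+B'$, $(AB)' = B'A'$, combined with the (conjugate-)linearity of $\mathbf{J}_\mcB$. Property (2), $A^{**}=A$, follows from the reflexivity of $\mcB$ (a consequence of uniform convexity), which gives $A''=A$, together with $\mathbf{J}^{**}=\mathbf{J}$; it is perhaps cleanest to read it off from the Hilbert pairing above together with Hermitian symmetry of $(\cdot,\cdot)_\mcH$.

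For property (5), the submultiplicative bound $\|A^*A\|_\mcB \leq \|A\|_\mcB^2$ is the delicate point and is where I expect the real work to lie. The natural route is to first argue $\|A^*\|_\mcB \leq \|A\|_\mcB$ and then multiply. To obtain the clean constant $1$ (rather than one involving the operator norm of $\mathbf{J}_\mcB^{-1}$) one uses that $A^*A$ is symmetric with respect to $(\cdot,\cdot)_\mcH$, so Lax's theorem is available, together with the duality computation
\[
\|A^*v\|_\mcH^2 = (A^*v, A^*v)_\mcH = (AA^*v,v)_\mcH \leq \|AA^*v\|_\mcH\,\|v\|_\mcH,
\]
and the fact that $\mathbf{J}_\mcB$ realizes every functional of $\mcB'$ as $(\cdot,u)_\mcH$ for some $u \in \mcB$, which allows one to transfer the Hilbert-space norm identity back to $\mcB$ via duality.

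Finally, for the closed densely defined case $A \in \mcC[\mcB]$, the Banach-space dual $A' : D(A') \subset \mcB' \to \mcB'$ is itself closed and densely defined because $\mcB$ is reflexive, and we set
\[
D(A^*) := \mathbf{J}_\mcB^{-1}\bigl(D(A')\bigr),\qquad A^* := \mathbf{J}_\mcB^{-1}A'\mathbf{J}_\mcB \text{ on } D(A^*).
\]
Both closedness and density of $D(A^*)$ in $\mcB$ transfer across $\mathbf{J}_\mcB$, since it is a linear homeomorphism $\mcB \to \mcB'$. This places $A^* \in \mcC[\mcB]$ and completes the construction, with the algebraic identities (1)--(4) then extending in the usual Hilbert-style manner on the appropriate common domains.
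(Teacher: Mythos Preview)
Your approach is essentially the paper's: define $A^* = \mathbf{J}_\mcB^{-1} A' \mathbf{J}_\mcB$, read off (1)--(4) from the corresponding identities for the Banach dual $A'$ (with (2) using reflexivity), and for (5) observe that $A^*A$ is symmetric with respect to $(\cdot,\cdot)_\mcH$ so Lax's theorem applies, then finish via $\|A^*A\|_\mcB \le \|A^*\|_\mcB\|A\|_\mcB \le \|A\|_\mcB^2$. The paper's treatment of the step $\|A^*\|_\mcB \le \|A\|_\mcB$ that you flag as delicate is in fact no more detailed than yours --- it simply asserts the chain of inequalities --- so your identification of the ingredients matches the paper exactly.
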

 \begin{proof}
 First, let ${\bf J}$ be the natural linear mapping from ${\mcH}  \to {\mcH}' $ and let ${\bf J}_\mcB$ be the restriction of ${\bf J}$ to $\mcB$.    If ${A}  \in \mcC[{\mcB}]$, then $ {{A'} {\bf J}_{\mcB} } : \mcB' \to \mcB'$. Since $A'$ is closed and densely defined, it follows that ${\bf J}_\mcB^{ - 1} {A'} {\bf J}_\mcB :{\mcB}  \to {\mcB}$ is a closed and densely defined linear operator. We define ${A}^ {*}   = [ {{\bf J}_\mcB^{ - 1} {A'} {\bf J}_\mcB } ] \in \mcC[\mcB]$. If  ${A}  \in L[{\mcB}]$, ${A}^ {*} =  {{\bf J}_\mcB^{ - 1} {A'} {\bf J} }_{\mcB}$ is defined on all of $\mcB$.  By the Closed Graph Theorem,   ${A}^ {*} \in L[\mcB]$.  
The proofs of (1)-(3) are straight forward.  To prove (4),
\beqn\lb{5: prod}
\begin{gathered}
  {\left( {BA} \right)^*} = {\mathbf{J}}_\mcB^{ - 1}{\left( {BA} \right)^\prime }{\mathbf{J}}_\mcB = {\mathbf{J}}_\mcB^{ - 1}{A^\prime }B'{\mathbf{J}}_\mcB \hfill \\
   = \left[ {{\mathbf{J}}_\mcB^{ - 1}{A^\prime }{\mathbf{J}}_\mcB} \right]\left[ {{\mathbf{J}}_\mcB^{ - 1}{B^\prime }{\mathbf{J}}_\mcB} \right] = {A^*}{B^*}. \hfill \\ 
\end{gathered} 
\eeqn
If we replace $B$ by $A^*$ in equation (\rf{5: prod}), noting that $A^{**}=A$, we also see that $(A^*A)^*=A^*A$.  To prove (5), we first see that:
\[
\left\langle {{A^*}Av,{{\mathbf{J}}_\mcB}(u)} \right\rangle  = {\left( {{A^*}Av,u} \right)_\mcH} = {\left( {v,{A^*}Au} \right)_\mcH},
\]
so that $A^*A$ is symmetric.  Thus, by Lax's Theorem, $A^*A$ has a bounded extension to $\mcH$ and $\left\| {A^* A} \right\|_\mcH \leqslant k\left\| {A^* A} \right\|_\mcB $, where $k$ is a positive constant.  We also have that 
\beqn\lb{6: prod}
{\left\| {{A^*}A} \right\|_\mcB} \leqslant {\left\| {{A^*}} \right\|_\mcB}{\left\| A \right\|_\mcB} \leqslant \left\| A \right\|_{_\mcB}^2.
\eeqn
It follows that $\lt\|A^*A\rt\|_\mcB \le \lt\|A\rt\|_\mcB^2$.  If  equality holds in (\rf{6: prod}), for all $A \in L[\mcB]$, then it is a $C^*$-algebra.  It is well-known that this is true if and only if $\mcB$ is a Hilbert space.  Thus, in general  the inequality in (\rf{6: prod}) is strict.
\end{proof}
\subsection{Operators on $\mcB$} 
For the remainder of the paper, we assume that $\mcB$ is uniformly convex and $\mcB'$ carries its Hilbert space representation.
\begin{Def} Let $U$ be bounded, $A \in \C[\mcB]$ and let ${\mathcal{U}},\,{\mathcal{V}}$ be subspaces of ${\mathcal{B}}$.  Then:
\begin{enumerate}
\item $A$ is said to be naturally self-adjoint if $D(A)=D(A^*)$ and $A = A^*$.
\item  $A$ is said to be normal if $D(A)=D(A^*)$ and $AA^*   = A^*  A$.
\item $U$ is unitary if $UU^*   = U^* U= I$.
\item The subspace ${\mathcal{U}}$ is $ \bot $ to ${\mathcal{V}}$ if and only, for each $v \in {\mathcal{V}}$ and $\forall u  \in {\mathcal{U}},\; ( {v , u })_\mcH  = 0$ and, for each $u  \in {\mathcal{U}}$ and  $\forall v  \in {\mathcal{V}}, \; ({u , v })_\mcH  = 0$.
\end{enumerate}
\end{Def}
The last definition is transparent since,  orthogonal subspaces in ${\mathcal{H}}$ induce orthogonal subspaces in ${\mathcal{B}}$.
\begin{thm} (Gram-Schmidt) For each fixed basis $\{ \varphi _i ,\,\,1 \leqslant i < \infty \}$ of ${\mathcal{B}}$, there is at least one set of dual functionals $\{ S_{i} \}$ such that  $\{ \{\psi _i \}, \, \{ S_{i} \},\;1 \leqslant i < \infty \} $ is a biorthonomal set of vectors for ${\mathcal{B}}$,  (i.e., $\left\langle {\psi _i ,S_{j} } \right\rangle  = \delta _{ij}$). 
\end{thm}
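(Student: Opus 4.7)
The plan is to import the standard Gram--Schmidt procedure from the Hilbert space $\mcH$ constructed in Section~2 and then convert the resulting orthonormal sequence into dual functionals via ${\bf J}_{\mcB}$. I first regard the basis $\{\varphi_i\}$ as lying inside $\mcH$ through the continuous dense embedding $\mcB \hookrightarrow \mcH$. Because $\{\varphi_i\}$ is an S-basis of $\mcB$, every finite subfamily is $\mcB$-linearly independent; moreover, the embedding into $\mcH$ is injective, since $\|u\|_\mcH = 0$ forces every coordinate $\langle \mcE_n^{*},u\rangle$ to vanish, and this forces $u=0$ because $\{\mcE_n\}$ is an S-basis. Consequently any such finite subfamily remains linearly independent when viewed inside $\mcH$.

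Next, apply the classical Gram--Schmidt recursion in $\mcH$ to $\{\varphi_i\}$ to obtain a sequence $\{\psi_i\}$ with $(\psi_i,\psi_j)_\mcH = \delta_{ij}$. Each $\psi_i$ is a finite linear combination of $\varphi_1,\ldots,\varphi_i$, so $\psi_i \in \mcB$; the $\mcH$-linear independence noted above is exactly what guarantees that no normalization denominator vanishes, so the recursion is well defined and stays inside $\mcB$ at every stage.

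Finally, set $S_j := {\bf J}_\mcB(\psi_j)$. By the Hilbert space representation of $\mcB'$ established earlier in this section, each $S_j$ lies in $\mcB'$, and by the defining property of ${\bf J}_\mcB$,
\[
\langle \psi_i,\, S_j\rangle \;=\; \langle \psi_i,\, {\bf J}_\mcB(\psi_j)\rangle \;=\; (\psi_i,\,\psi_j)_\mcH \;=\; \delta_{ij},
\]
which is the desired biorthonormality. The only step I view as genuinely requiring care is the nonvanishing of the Gram--Schmidt denominators, and that reduces to the injectivity of $\mcB \hookrightarrow \mcH$ and the totality of $\{\mcE_n^{*}\}$ recorded in the first paragraph; every other ingredient is a direct appeal to the Hilbert space representation just proved.
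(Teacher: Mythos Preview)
Your argument is essentially the paper's own: embed the basis in $\mcH$, run Gram--Schmidt there, and pull back dual functionals via ${\bf J}_\mcB$. The only cosmetic difference is that the paper normalizes the orthogonal vectors in the $\mcB$-norm (so that $\|\psi_i\|_\mcB=1$) rather than in the $\mcH$-norm as you do; your choice makes the verification $\langle \psi_i,S_j\rangle=(\psi_i,\psi_j)_\mcH=\delta_{ij}$ immediate, whereas the paper's normalization requires an additional rescaling of the functionals to get $1$ on the diagonal.
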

\begin{proof}Since each $\varphi _i$ is in ${\mathcal{H}}$, we can construct an orthogonal set of vectors $\{ \phi _i ,\,\,1 \leqslant i < \infty \} $ in ${\mathcal{H}}$ by the standard Gram-Schmidt process.  Set $\psi _i  = {{\phi _i } \mathord{\left/ {\vphantom {{\phi _i } {\left\| {\phi _i } \right\|}}} \right.
 \kern-\nulldelimiterspace} {\left\| {\phi _i } \right\|}}_\mathcal{B} $ and let  $\psi _i^*  = \bf{J}(\psi)$.  From here, it is easy to check that $\{ \{\psi _i \}, \, \{ \psi _i^* \},\;1 \leqslant i < \infty \} $ is a biorthonormal set and the family $\{ \psi _i^* \}$ is unique.  
\end{proof}
We close this section with the following observation about the use of $\mcH$.  Let $A$ be any closed densely defined naturally selfadjoint linear operator on $\mcB$ with a discrete spectrum $\{ \la_i \}$.  It can be extended to ${\mcH}$ with the same properties. If we compute the ratio $\tfrac{{\left\langle {A\psi ,\psi^*  } \right\rangle }}
{{\left\langle {\psi ,\psi^* } \right\rangle }} $ in $\mcB$, it will be ``close" to the value of 
$\tfrac{{\left( {{\bar A} \psi ,\psi } \right)_{{\mcH} } }}
{{\left( {\psi ,\psi } \right)_{{\mcH} } }}$ in ${{\mcH}}$.  By Lax's Theorem \cite{L}, the extension from $\mcB$ to $\mcH$ does not change the point spectrum, so we can use the min-max theorem on $\mcH$ to compute the eigenvalues and eigenfunctions of $A$ via $\bar A$ exactly.   Since $\mcB$ is dense in $\mcH$, it follows that the min-max theorem also holds on $\mcB$.
\subsubsection{Selfadjointness}
With respect to our definition of natural selfadjointness, the following related definition is due to Palmer \cite{P}, where the operator is called symmetric.  This is essentially the same as a Hermitian operator as defined by  Lumer \cite{LU}. (An operator $A$ is dissipative if $-A$ is accretive.)
\begin{Def}
A closed densely defined linear operator $A$ on ${\mathcal{B}}$ is called self-conjugate if both $iA$ and $-iA$ are dissipative.
\end{Def}
\begin{thm}(Vidav-Palmer)  A linear operator $A$, defined on ${\mathcal{B}}$, is self-conjugate if and only if $iA$ and $-iA$ are generators of isometric semigroups.
\end{thm}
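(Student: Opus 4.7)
The plan is to derive both implications from the Lumer--Phillips theorem, using the fact that a $C_0$-semigroup of isometries is in particular a contraction semigroup, and that two opposing contraction semigroups together form an isometric group. Self-conjugacy of $A$ is, by definition, a pair of dissipativity conditions on $iA$ and $-iA$, while the generator property gives those semigroups in each direction.

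For the easier direction ($\Leftarrow$), suppose $iA$ is the generator of a $C_0$-semigroup $U_{+}(t)$ of isometries and $-iA$ generates a $C_0$-semigroup $U_{-}(t)$ of isometries. Since isometries are contractions, the standard one-sided estimate for generators of contraction semigroups shows that both $iA$ and $-iA$ are dissipative in the Lumer sense, so $A$ is self-conjugate.

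For the harder direction ($\Rightarrow$), assume $\pm iA$ are dissipative. The plan is to verify the Lumer--Phillips range condition for each, namely that $\lambda - iA$ and $\lambda + iA$ are surjective for some $\lambda>0$. Closedness of $A$ together with dissipativity of $\pm iA$ forces $\lambda - iA$ to have closed range and to be injective with bounded inverse on its range; for density of the range, I would pass to $\mcB'$ via the Hilbert space representation (Theorem 2.3) and the newly available adjoint $A^{*}$ (Theorem 2.4), so that a functional annihilating the range corresponds to an element in the kernel of $\lambda + iA^{*}$, which the dissipativity of the adjoint side rules out. Once the range condition is established, Lumer--Phillips produces contraction $C_0$-semigroups $U_{+}(t)$ and $U_{-}(t)$ generated by $iA$ and $-iA$ respectively.

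It then remains to show these semigroups are isometric. Setting $U(t)=U_{+}(t)$ for $t\ge 0$ and $U(t)=U_{-}(-t)$ for $t<0$, differentiating $U_{+}(t)U_{-}(t)x$ on $D(A)$ shows it is constant in $t$, hence $U_{-}(t)U_{+}(t)=U_{+}(t)U_{-}(t)=I$. The standard sandwich
\[
\|x\| = \|U_{-}(t)U_{+}(t)x\| \le \|U_{+}(t)x\| \le \|x\|
\]
then forces $U_{+}(t)$ (and symmetrically $U_{-}(t)$) to be isometric. The main obstacle I anticipate is the range condition in the forward direction; this is the only step that genuinely uses the Banach-space structure beyond abstract semigroup bookkeeping, and it is where the Hilbert space representation of $\mcB'$ from Section 2 plays its role, allowing the duality argument that in a general Banach space would have to be treated ad hoc.
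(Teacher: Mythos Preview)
The paper does not prove this statement at all: it is stated as a classical result, attributed to Vidav and Palmer (see the references to \cite{P} and \cite{LU}), and is quoted only as a tool for the theorem that follows it. So there is no ``paper's own proof'' against which to compare your attempt.

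That said, a few remarks on your sketch. The backward direction is fine: generators of contraction $C_0$-semigroups are dissipative, so if $\pm iA$ generate isometric semigroups you immediately get self-conjugacy. In the forward direction, your identification of the Lumer--Phillips range condition as the crux is correct, but the mechanism you propose for verifying it is suspect. Dissipativity of $\pm iA$ is a statement about the $\mathcal{B}$-duality map $\mathcal{J}$, i.e.\ about $\operatorname{Re}\langle iAu,\mathcal{J}(u)\rangle$, whereas the adjoint $A^{*}$ of Theorem~2.3 is built from the $\mathcal{H}$-inner product via ${\bf J}_{\mathcal{B}}$. These two dualities are different objects, and there is no reason given (nor any result in the paper) that converts dissipativity of $\pm iA$ in the $\mathcal{B}$-sense into dissipativity of $\pm iA^{*}$ in the $\mathcal{B}$-sense. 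Concretely: if $v^{*}={\bf J}_{\mathcal{B}}(v)$ annihilates the range of $\lambda-iA$, you obtain $(\lambda+iA^{*})v=0$, but to derive a contradiction you would need $\operatorname{Re}\langle -iA^{*}v,\mathcal{J}(v)\rangle\le 0$, and nothing in your hypotheses yields that. So the appeal to the paper's Hilbert-space representation is a red herring here.

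The genuine Vidav--Palmer argument works on an arbitrary Banach space and predates the constructions of this paper; it does not need the embedding $\mathcal{B}\subset\mathcal{H}$. If you want to complete the forward direction along Lumer--Phillips lines, the standard route in a reflexive space is to show that the Banach transpose $(-iA)'=iA'$ is dissipative on $\mathcal{B}'$ (this is where both sign conditions on $A$ are used, together with reflexivity), which then forces surjectivity of $\lambda-iA$; see Palmer \cite{P}.
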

\begin{thm}
The operator  $A$, defined on $\mathcal B$, is self-conjugate if and only if it is naturally self-adjoint.
\end{thm}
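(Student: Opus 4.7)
The strategy is to use the Vidav--Palmer characterization stated just above to translate self-conjugacy of $A$ into the existence of isometric semigroups generated by $\pm iA$ on $\mcB$, and then to match this against the $\mcH$-symmetry relation $(Au,v)_\mcH=(u,Av)_\mcH$ that encodes natural selfadjointness $A=A^*$. Each of the two implications will be proved by passing through a unitary group on $\mcH$ and tying it to the $\mcB$-side.

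For the direction ``naturally self-adjoint $\Rightarrow$ self-conjugate'', I start from $A=A^*$. By the construction of $A^*$ via $\mathbf{J}_\mcB$,
\[
(Au,v)_\mcH = \langle Au,\mathbf{J}_\mcB(v)\rangle = \langle u,A'\mathbf{J}_\mcB(v)\rangle = \langle u,\mathbf{J}_\mcB(Av)\rangle = (u,Av)_\mcH
\]
for all $u,v\in D(A)$, so $A$ is symmetric on $\mcH$. I would then promote this to selfadjointness of the $\mcH$-extension $\bar A$ (using that $I+A^*A$ is invertible, as noted after Theorem~1.1, together with a Cayley-transform argument on the resolvent). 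Stone's theorem then produces a unitary group $\tilde U(t)=e^{it\bar A}$ on $\mcH$, and the key step is to verify that $\tilde U(t)$ leaves $\mcB$ invariant and acts $\mcB$-isometrically; applying the same reasoning to $-iA$ and invoking Vidav--Palmer yields self-conjugacy.

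For the converse, given isometric semigroups $U_\pm(t)$ on $\mcB$ generated by $\pm iA$, the plan is to extend them to unitary groups on $\mcH$. Since $\mcB$ is dense in $\mcH$, extension reduces to showing $\mcH$-boundedness of $U_\pm(t)$ on $\mcB$; I would check this by expanding $u\in\mcB$ in the S-basis $\{\mcE_n\}$ and using $\mcB$-isometry to control the biorthonormal coefficients $\langle\mcE_n^*,U_\pm(t)u\rangle$ appearing in $\|U_\pm(t)u\|_\mcH^2=\sum_n t_n|\langle\mcE_n^*,U_\pm(t)u\rangle|^2$. Once the $\mcH$-unitary extension exists, Stone's theorem identifies its generator as selfadjoint on $\mcH$, and translating the resulting symmetry relation $(Au,v)_\mcH=(u,Av)_\mcH$ back through $\mathbf{J}_\mcB$ yields $A'\mathbf{J}_\mcB=\mathbf{J}_\mcB A$ on $D(A)$, i.e.\ $A=A^*$.

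The main obstacle throughout is exactly this transfer between $\mcB$-isometry and $\mcH$-unitarity. The embedding $\mcB\subset\mcH$ is continuous but strict, so neither direction is automatic: in one, a group isometric in the $\mcB$-norm must be shown to preserve the smaller $\mcH$-norm; in the other, a group unitary in the $\mcH$-norm must be shown to preserve $\mcB$ and its larger norm. In both cases the argument must genuinely exploit the specific Kuelbs-type construction of $\mcH$ from the S-basis (with weights $t_n=2^{-n}$) together with the uniform convexity of $\mcB$, since only these features tie the two norms together tightly enough for such a transfer to go through.
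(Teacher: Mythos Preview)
Your overall plan---lift to $\mcH$, use that on a Hilbert space self-conjugacy and self-adjointness coincide via Stone's theorem, then restrict back to $\mcB$---is exactly the paper's route. The paper's entire proof is the three-line version of what you wrote: extend $A$ and $A^*$ to $\bar A,\bar A^*$ on $\mcH$; note that on $\mcH$ the two notions coincide; then assert ``it follows that the restrictions coincide on $\mcB$.'' So you are not proposing a different argument; you are trying to fill in the step the paper leaves implicit.

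That said, the obstacle you isolate is real, and the coefficient method you sketch does not remove it. For the direction self-conjugate $\Rightarrow$ naturally self-adjoint, you want a $\mcB$-isometric $U(t)$ to extend boundedly to $\mcH$. But $\mcB$-isometry only gives $|\langle \mcE_n^*,U(t)u\rangle|\le \|\mcE_n^*\|_{\mcB'}\|U(t)u\|_\mcB=\|u\|_\mcB$, hence
\[
\|U(t)u\|_\mcH^2=\sum_n t_n\,|\langle \mcE_n^*,U(t)u\rangle|^2\le \|u\|_\mcB^2,
\]
which is just the embedding estimate again and does not yield $\|U(t)u\|_\mcH\le C\|u\|_\mcH$; without that, $U(t)$ need not extend to a bounded, let alone unitary, operator on $\mcH$. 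The reverse direction has the dual defect: nothing in your outline forces an $\mcH$-unitary group to preserve $\mcB$ or to be isometric in the $\mcB$-norm, and neither the weights $t_n=2^{-n}$ nor uniform convexity provide such a mechanism on their own. You have correctly located the point where both your argument and the paper's three-line proof are incomplete, but the S-basis expansion does not close the gap; one would need an independent reason (e.g., a spectral or resolvent argument intrinsic to $\mcB$) to effect the norm transfer.
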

\begin{proof} Let $\bar A$ and ${\bar A}^*$ be the closed densely defined extensions of $A$ and $A^*$ to ${\mathcal H}$.  On  ${\mathcal{H}}$, $\bar A$ is naturally self-adjoint if and only if $i \bar A$ generates a unitary group, if and only if it is self-conjugate.  Thus, both definitions coincide on ${\mathcal{H}}$.  It follows that the restrictions coincide on ${\mathcal{B}}$.
\end{proof}
The proof of the last theorem represents a general approach for proving new results for $\mcB$.  The following are two representative.
\begin{thm}{\rm{(Polar Representation)}} Let $\mcB$ be a  uniformly convex Banach space with an S-basis. If $A \in \C[\mcB]$, then there exists a partial isometry $U$ and a naturally self-adjoint operator $T$, with $D(T)=D(A)$ and $A=UT$.  Furthermore, $T=[A^*A]^{1/2}$, in a well-defined sense. 
\end{thm}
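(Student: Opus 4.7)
The plan is to import the classical polar decomposition from Hilbert space theory, using the dense continuous embedding $\mcB \subset \mcH$ supplied by the natural Hilbert space representation, and then descend the decomposition back to $\mcB$.

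First, given $A \in \mcC[\mcB]$, extend $A$ to a closed, densely defined operator $\bar A$ on $\mcH$. By Theorem~\rf{5: adj} and the product rule $(BA)^*=A^*B^*$, the operator $A^*A$ lies in $\mcC[\mcB]$ and satisfies $(A^*A)^* = A^*A$, so its $\mcH$-extension $\overline{A^*A}$ agrees with $\bar A^* \bar A$ and is nonnegative and selfadjoint on $\mcH$. Apply the Hilbert space functional calculus to define $\bar T = (\bar A^* \bar A)^{1/2}$, which is nonnegative selfadjoint on $\mcH$, with $D(\bar T) \supset D(\bar A)$ and $\|\bar T x\|_\mcH = \|\bar A x\|_\mcH$ for $x\in D(\bar A)$.

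The classical Hilbert space polar decomposition then yields a unique partial isometry $\bar U$ on $\mcH$ with initial space $\overline{R(\bar T)}$ and final space $\overline{R(\bar A)}$, such that $\bar A = \bar U \bar T$ and $\bar T = \bar U^* \bar A$. I would define $T$ to be the restriction of $\bar T$ to $\mcB$, with $D(T) = D(A)$, and $U$ the restriction of $\bar U$ to $\mcB$. Natural selfadjointness of $T$ on $\mcB$ then follows by the argument underlying Theorem~2.8: since $i\bar T$ generates a unitary group on $\mcH$, its restriction generates an isometric group on $\mcB$, so $T$ is self-conjugate and hence naturally selfadjoint on $\mcB$. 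The identity $A = UT$ on $D(A)$ is inherited directly from the Hilbert space identity, and uniqueness of the pair $(T,U)$ on $\mcB$ follows from the Hilbert space uniqueness plus density of $\mcB$ in $\mcH$.

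The main obstacle, and the reason for the qualifier ``in a well-defined sense'' in the statement, is verifying that both $\bar T$ and $\bar U$ actually leave $\mcB$ invariant. For $T$: the operator $A^*A$ is symmetric with respect to the $\mcH$-inner product by the construction in Theorem~\rf{5: adj}, so Lax's Theorem applies and the spectral resolution of $\bar A^*\bar A$ can be approximated by polynomials in $A^*A$, each of which preserves $\mcB$; a limit argument combined with the inequality $\|\cdot\|_\mcH \le \|\cdot\|_\mcB$ from (\rf{2: one}) places the square root back in $\mcC[\mcB]$. For $U$: the expressions $U^*U$ and $UU^*$ are orthogonal projections onto $\overline{R(T)}$ and $\overline{R(A)}$, and Lax's Theorem again forces these to extend to bounded projections on $\mcB$; writing $U$ as the strong limit of $\bar A \, f_n(\bar T)$ where $f_n$ are bounded Borel functions approximating $t \mapsto t^{-1}\chi_{(0,\infty)}(t)$, each $f_n(\bar T)$ preserves $\mcB$ by the previous argument, and the partial isometry property passes to the restriction by density.
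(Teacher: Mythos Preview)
Your approach is essentially the paper's: the paper does not give a detailed proof of this theorem but presents it (together with the spectral representation) as a ``representative'' application of the general method established in the preceding result---extend $A$ to $\bar A$ on $\mcH$, invoke the Hilbert space theorem, and restrict back to $\mcB$. You follow exactly this route, and in fact go further by attempting to justify the invariance questions (that $\bar T$ and $\bar U$ preserve $\mcB$) which the paper leaves entirely implicit under the phrase ``in a well-defined sense.''
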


\begin{thm}{\rm{(Spectral Representation)}} Let $\mcB$ be a  uniformly convex Banach space with an S-basis and let $A \in \C[\mcB]$, be a naturally self-adjoint linear operator.  Then, there exists a operator-valued spectral measure $E_x,  x \in \R$ and for each $u \in D(A)$,  
\[
Au = \int_\mathbb{R} {xd{E_x}(u)}. 
\] 
\end{thm}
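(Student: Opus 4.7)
The plan is to reduce everything to the classical spectral theorem on the ambient Hilbert space $\mcH$. First, I would extend $A$ to a self-adjoint operator on $\mcH$: by the preceding theorem, natural self-adjointness of $A$ on $\mcB$ is equivalent to self-conjugacy, so by Vidav--Palmer both $iA$ and $-iA$ generate isometric semigroups on $\mcB$. Lax's theorem extends these to $\mcH$, where they fit together into a strongly continuous unitary group $\{U(t)\}_{t \in \R}$; Stone's theorem then produces a closed densely defined self-adjoint operator $\bar A$ on $\mcH$ with $D(A) \subset D(\bar A)$ and $\bar A|_{D(A)} = A$. Applying the classical Hilbert space spectral theorem to $\bar A$ yields a unique projection-valued measure $\{\bar E_x\}_{x \in \R}$ with
\[
\bar A v = \int_\R x\, d\bar E_x(v),\qquad v \in D(\bar A),
\]
together with the bounded Borel functional calculus $\phi \mapsto \phi(\bar A)$.

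The main obstacle is to show that each $\bar E_x$ preserves $\mcB$, so that $E_x := \bar E_x|_\mcB$ is a well-defined element of $L[\mcB]$. Since spectral projections are discontinuous functions of $\bar A$, I would route through the Fourier representation
\[
\phi(\bar A) = \tf{1}{\sqrt{2\pi}} \int_\R \hat\phi(t)\, U(t)\, dt
\]
for Schwartz $\phi$. Because $U(t)$ restricts to an isometric group on $\mcB$, the right-hand side is a Bochner integral in $L[\mcB]$ that produces an operator on $\mcB$ whose $\mcH$-extension coincides with $\phi(\bar A)$; Lax's theorem controls its $\mcB$-norm by a constant multiple of $\sup|\phi|$. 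Approximating $\chi_{(-\infty,x]}$ by a uniformly bounded sequence of Schwartz functions converging pointwise on the spectrum of $\bar A$ and applying dominated convergence then yields $\bar E_x(\mcB) \subset \mcB$, with $E_x \in L[\mcB]$ by the closed graph theorem. Countable additivity and the projection identities transfer from $\{\bar E_x\}$ by continuity of the embedding $\mcB \hookrightarrow \mcH$, so $\{E_x\}_{x \in \R}$ is the required operator-valued spectral measure on $\mcB$.

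Finally, for $u \in D(A)$ the Riemann--Stieltjes partial sums $\sum_k x_k (E_{x_{k+1}} - E_{x_k}) u$ belong to $\mcB$ and, regarded in $\mcH$, converge to $\bar A u = Au \in \mcB$. Uniform $\mcB$-control on these sums, obtained by applying the functional calculus above to the truncations $x \chi_{[-N,N]}(x)$, upgrades the convergence to $\mcB$-norm and yields the representation $Au = \int_\R x\, dE_x(u)$ in $\mcB$. The whole argument hinges on the middle step: once $\mcB$-invariance of the spectral projections is established, the integral representation follows from routine limiting arguments combined with the density of $\mcB$ in $\mcH$ and the uniqueness of limits in the weaker $\mcH$-norm.
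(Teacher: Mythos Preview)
Your overall strategy --- pass to $\mcH$, apply the Hilbert-space spectral theorem, and restrict --- is exactly what the paper intends; the paper does not write out a proof but simply points to this extend/apply/restrict method as the ``general approach'' illustrated by the preceding theorem. The difficulty you correctly isolate, namely showing that each spectral projection $\bar E_x$ leaves $\mcB$ invariant, is precisely the point the paper leaves implicit.

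However, your proposed resolution of that step has a genuine gap. You assert that ``Lax's theorem controls its $\mcB$-norm by a constant multiple of $\sup|\phi|$,'' but Lax's theorem runs in the opposite direction: for a $\mcB$-bounded operator $T$ that is symmetric in the $\mcH$ inner product it yields $\|T\|_\mcH \le k\,\|T\|_\mcB$, not a bound on $\|T\|_\mcB$ in terms of $\|T\|_\mcH$. The only $\mcB$-estimate your Bochner integral actually delivers is
\[
\|\phi(\bar A)\|_{\mcB}\ \le\ \int_\R |\hat\phi(t)|\,\|U(t)\|_\mcB\,dt\ =\ \|\hat\phi\|_{L^1(\R)},
\]
and this quantity need not stay bounded when a Schwartz sequence $\phi_n$ approaches the indicator $\chi_{(-\infty,x]}$ pointwise. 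Consequently the approximation gives only $\mcH$-convergence of $\phi_n(\bar A)u$ to $\bar E_x u$; since $\mcB$ is merely a dense, not closed, subspace of $\mcH$, an $\mcH$-limit of vectors in $\mcB$ need not lie in $\mcB$, and the conclusion $\bar E_x u\in\mcB$ does not follow. The same misreading of Lax recurs in your last paragraph, where ``uniform $\mcB$-control'' on the Riemann--Stieltjes sums via the functional calculus for $x\chi_{[-N,N]}(x)$ is asserted. Without an independent argument that bounded Borel functions of $\bar A$ carry $\mcB$ into $\mcB$ with $\mcB$-operator norm dominated by their supremum, both the invariance of $\mcB$ under $\bar E_x$ and the $\mcB$-norm convergence of the spectral integral remain unproved.
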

\section{{\bf Examples}}
The following related Hilbert space is more general.  It is a concrete implementation of the abstract construction of Kuelbs \cite{K} and, in a different form, is due to Steadman \cite{ST}.   It is constructed over $L^1$, which is not uniformly convex, but is more suitable for applications. It was first used to provide a rigorous foundation for the Feynman path integral formulation of quantum mechanics in \cite{GZ}.  We  use it in this section to provide a few concrete examples.
\subsection{The space $KS^2[\R^n]$}
 
On $\R^n$ let $\mathbb{Q}^n$ be the set $\left\{ {{\mathbf{x}} = (x_1 ,x_2  \cdots ,x_n) \in {\mathbb{R}}^n } \right\}$ such that $
x_i$ is rational for each $i$.  Since this is a countable dense set in ${\mathbb{R}}^n $, we can arrange it as $\mathbb{Q}^n  = \left\{ {{\mathbf{x}}^1, {\mathbf{x}}^2, {\mathbf{x}}^3,  \cdots } \right\}$.  For each $l$ and $i$, let ${\mathbf{B}}_l ({\mathbf{x}}^i ) $ be the closed cube centered at ${\mathbf{x}}^i$,  with sides parallel to the coordinate axes and diagonal $r_l  = 2^{ - l}, l \in {\mathbb{N}}$.  Now choose the  natural order which maps $\mathbb{N} \times \mathbb{N}$ bijectively to $\mathbb{N}$:
\[
\{(1,1), \ (2,1), \ (1,2), \ (1,3), \  (2,2), \  (3,1), \ (3,2), \ (2,3), \  \cdots \}.
\]
Let $\left\{ {{\mathbf{B}}_{k} ,\;k \in \mathbb{N}}\right\}$ be the resulting set  of (all) closed cubes $
\{ {\mathbf{B}}_l ({\mathbf{x}}^i )\;\left| {(l,i) \in \mathbb{N} \times \mathbb{N}\} } \right.
$
centered at a point in $\mathbb{Q}^n $ and let ${\mathcal{E}}_k ({\mathbf{x}})$ be the characteristic function of ${\mathbf{B}}_k $, so that ${\mathcal{E}}_k ({\mathbf{x}})$ is in ${{L}}^p [{\mathbb{R}}^n ]$ for $1 \le p \le \infty$.  Define $F_{k} (\; \cdot \;)$ on $
{{L}}^1 [{\mathbb{R}}^n ] $ by
 \beqn
F_{k} (f)=\int_{{\mathbb{R}}^n }{{\mcE}_{k}({\mathbf{x}})f({\mathbf{x}})d\la_{n}({\mathbf{x}})}. 
\eeqn
It is clear that $F_{k} (\; \cdot \;)$ is a bounded linear functional on ${{L}}^p [{\mathbb{R}}^n ] $ for each ${k}$, $\left\| {F_{k} } \right\|   \le 1$ and, if $F_k (f) = 0$ for all ${k}$, $f = 0$ so that $\left\{ {F_{k} } \right\}$ is fundamental on ${{L}}^p [{\mathbb{R}}^n ] $ for $1 \le p \le \infty$ .
Set ${t_{k}} =2^{-k}$, so that ${\sum\nolimits_{k = 1}^\infty  {t_k}}=1$  and define a measure $d{\mu}$ on ${\mathbb{R}}^n \, \times {\mathbb{R}}^n $ by: 
\[
d{\mu}= \left[ {\sum\nolimits_{k = 1}^\infty  {t_k {\mathcal{E}}_k ({\mathbf{x}}){\mathcal{E}}_k ({\mathbf{y}})} } \right]d\la_{n}({\mathbf{x}})d\la_{n}({\mathbf{y}}).
\]
To construct our Hilbert space, define an inner product $\left( {\; \cdot \;} \right) $ on ${{L}}^1 [{\mathbb{R}}^n ] $ by
\beqn
\begin{gathered}
 \left( {f,g} \right) = \int_{\mathbb{R}^n  \times \mathbb{R}^n } {f({\mathbf{x}})g({\mathbf{y}})^ *  d{\mu}}  \hfill \\
{\text{             }} = \sum\nolimits_{k = 1}^\infty  {t_k } \left[ {\int_{\mathbb{R}^n } {{\mathcal{E}}_k ({\mathbf{x}})f({\mathbf{x}})d\la_{n}({\mathbf{x}})} } \right]\left[ {\int_{\mathbb{R}^n } {{\mathcal{E}}_k ({\mathbf{y}})g({\mathbf{y}})d\la_{n}({\mathbf{y}})} } \right]^ *.   \hfill \\ 
\end{gathered} 
\eeqn
We call the completion of ${{L}}^1 [{\mathbb{R}}^n ] $, with the above inner product, the Kuelbs-Steadman space, ${KS}^2 [{\mathbb{R}}^n ] $.  Steadman \cite{ST} constructed this space by modifying a method developed by Kuelbs \cite{K} for other purposes.  Her interest was in showing that ${{L}}^1 [{\mathbb{R}} ] $ can be densely and continuously embedded in a Hilbert space which contains the non-absolutely integrable functions.  To see that this is the case, suppose $f$ is a non-absolutely integrable function, say Henstock-Kurzweil integral (or of any other type see \cite{H}), then:
\[
\begin{gathered} 
\left\| f \right\|_{{KS}^2}^2  = \sum\nolimits_{k = 1}^\infty  {t_k } \left| {\int_{\mathbb{R}^n } {{\mathcal{E}}_k ({\mathbf{x}})f({\mathbf{x}})d\la_{n}({\mathbf{x}})} } \right|^2  \hfill \\ 
 \leqslant \sup _k \left| {\int_{\mathbb{R}^n } {{\mathcal{E}}_k ({\mathbf{x}})f({\mathbf{x}})d\la_{n}({\mathbf{x}})} } \right|^2  <\iy.  \hfill \\ 
\end{gathered} 
\]
 Since the absolute value is outside the integral, we see that $f \in{KS}^2 [{\mathbb{R}}^n ] $ for any of the definitions of a non-absolute integral (see \cite{GO}).  A detailed discussion of this space and its relationship to the Feynman path integral formulation to quantum mechanics, can be found in \cite{GZ}
\begin{thm} The space ${KS}^2 [{\mathbb{R}}^n ]$ contains ${{L}}^p [{\mathbb{R}}^n ]$ (for each $p,\;1 \leqslant p \leqslant \infty$)  as dense subspaces.
\end{thm}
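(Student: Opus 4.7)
The plan is to verify, for each $p \in [1,\infty]$, that $L^p[\R^n] \subset KS^2[\R^n]$ as a continuous dense linear subspace. The key preliminary observation is a uniform volume bound on the cubes used in the construction of $d\mu$: since the diagonals range over $r_l = 2^{-l}$ with $l \ge 1$, every cube $\mathbf{B}_k$ has side at most $(2\sqrt{n})^{-1}$ and Lebesgue measure at most $M := (2\sqrt{n})^{-n} \le 1$. In particular, $\mcE_k \in L^q[\R^n]$ for every $q \in [1,\infty]$ with $\|\mcE_k\|_q \le 1$.

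For the continuous embedding, I would fix $f \in L^p[\R^n]$ and apply H\"older's inequality with conjugate exponent $q$:
\[
|F_k(f)| \;=\; \left|\int_{\R^n}\mcE_k({\mathbf x}) f({\mathbf x})\, d\la_n({\mathbf x})\right| \;\le\; \|\mcE_k\|_q \|f\|_p \;\le\; \|f\|_p,
\]
uniformly in $k$. Squaring, weighting by $t_k$, and using $\sum_k t_k = 1$ yields $\|f\|_{KS^2}^2 \le \|f\|_p^2$, so $L^p[\R^n] \hookrightarrow KS^2[\R^n]$ continuously. The special case $p = 1$ gives the specialization $\|f\|_{KS^2} \le \|f\|_1$ on $L^1$, which I will reuse for density.

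For density, I would use a two-step approximation. By construction $L^1[\R^n]$ is dense in $KS^2[\R^n]$, so given $g \in KS^2[\R^n]$ and $\e > 0$, I first pick $f \in L^1[\R^n]$ with $\|g - f\|_{KS^2} < \e/2$. Next, since compactly supported simple functions lie in $L^1 \cap L^p$ and are $\|\cdot\|_1$-dense in $L^1[\R^n]$ (via standard truncation, valid for every $p \in [1,\infty]$), I choose such an $h \in L^p$ with $\|f - h\|_1 < \e/2$. The inequality $\|f-h\|_{KS^2} \le \|f-h\|_1$ from the previous paragraph transfers the second approximation to the $KS^2$ norm, giving $\|g - h\|_{KS^2} < \e$ with $h \in L^p$.

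I do not foresee a genuine obstacle. The one point requiring care is the borderline case $p = \infty$, where H\"older pairs $\mcE_k \in L^1$ against $f \in L^\infty$; here it is precisely the uniform volume bound $|\mathbf{B}_k| \le M$ that keeps $\|F_k\|_{(L^\infty)^*}$ uniformly bounded, without which the weighted sum defining $\|\cdot\|_{KS^2}^2$ might fail to close. The density step for $p=\infty$ then requires only that bounded compactly supported functions are $\|\cdot\|_1$-dense in $L^1[\R^n]$, which is standard.
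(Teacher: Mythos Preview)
Your proof is correct and rests on the same core ingredient as the paper's: H\"older's inequality together with the uniform bound $\|\mcE_k\|_q \le 1$ coming from $|\mathbf{B}_k| \le (2\sqrt{n})^{-n} \le 1$. The paper treats the cases $1<p<\infty$ and $p=\infty$ separately, bounding $|F_k(f)|$ by $(\int \mcE_k |f|^p)^{1/p}$ in the first case and by $vol(\mathbf{B}_k)\,\|f\|_\infty$ in the second, whereas you handle all $p$ at once via $|F_k(f)| \le \|\mcE_k\|_{p'}\|f\|_p \le \|f\|_p$; this is a cosmetic streamlining of the same idea. The one genuine addition in your argument is the explicit density step: the paper only verifies the continuous containment $L^p \subset KS^2$ for $p>1$ and relies implicitly on the density of $L^1$, while you spell out the two-stage approximation (first $L^1$ in $KS^2$, then $L^1\cap L^p$ in $L^1$) that actually delivers density of $L^p$ in $KS^2$.
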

\begin{proof} By construction, we know  that ${KS}^2 [{\mathbb{R}}^n ]$ contains ${{L}}^1 [{\mathbb{R}}^n ]$ densely.   Thus, we need only show that ${{L}}^q [{\mathbb{R}}^n ]  \subset {KS}^2 [{\mathbb{R}}^n ]$ for $q \ne 1$.  Let $f \in {{L}}^q [{\mathbb{R}}^n ]$ and $q < \infty $. Since  $\left| { {\mathcal{E}}({\mathbf{x}})} \right| = {\mathcal{E}}({\mathbf{x}}) \leqslant 1$ and ${\left| { {\mathcal{E}}({\mathbf{x}})} \right|^q} \leqslant  {\mathcal{E}}({\mathbf{x}})$, we have 
\[
\begin{gathered}
 \left\| f \right\|_{{KS}^2}  = \left[ {\sum\nolimits_{k = 1}^\infty  {t_k } \left| {\int_{{\mathbb{R}}^n } {{\mathcal{E}}_k ({\mathbf{x}})f({\mathbf{x}})d\la_{n}({\mathbf{x}})} } \right|^{\frac{{2q}}{q}} } \right]^{1/2}  \hfill \\
{\text{       }} \leqslant \left[ {\sum\nolimits_{k = 1}^\infty  {t_k } \left( {\int_{{\mathbb{R}}^n } {{\mathcal{E}}_k ({\mathbf{x}})\left| {f({\mathbf{x}})} \right|^q d\la_{n}({\mathbf{x}})} } \right)^{\frac{2}{q}} } \right]^{1/2}  \hfill \\
{\text{      }} \leqslant \sup _k \left( {\int_{{\mathbb{R}}^n } {{\mathcal{E}}_k ({\mathbf{x}})\left| {f({\mathbf{x}})} \right|^q d\la_{n}({\mathbf{x}})} } \right)^{\frac{1}
{q}}  \leqslant \left\| f \right\|_q . \hfill \\ 
\end{gathered} 
\]
Hence, $f \in{KS}^2 [{\mathbb{R}}^n ] $.  For $q = \infty $, first note that $ vol({\mathbf{B}}_k )^2 \le \left[ {\frac{1}
{{2\sqrt n }}} \right]^{2n}$, so we have 
\[
\begin{gathered}
  \left\| f \right\|_{{KS}^2}  = \left[ {\sum\nolimits_{k = 1}^\infty  {t_k } \left| {\int_{{\mathbb{R}}^n } {{\mathcal{E}}_k ({\mathbf{x}})f({\mathbf{x}})d\la_{n}({\mathbf{x}})} } \right|^2 } \right]^{1/2}  \hfill \\
  {\text{       }} \leqslant \left[ {\left[ {\sum\nolimits_{k = 1}^\infty  {t_k [vol({\mathbf{B}}_k )]^2 } } \right][ess\sup \left| f \right|]^2 } \right]^{1/2}  \leqslant {\left[ {\frac{1}
{{2\sqrt n }}} \right]^{n}}\left\| f \right\|_\infty  . \hfill \\ 
\end{gathered} 
\]
Thus $f \in{KS}^2 [{\mathbb{R}}^n ] $, and ${{L}}^\infty  [{\mathbb{R}}^n ] \subset{KS}^2 [{\mathbb{R}}^n ]$.
\end{proof}
The fact that ${{L}}^\infty  [{\mathbb{R}}^n ] \subset{KS}^2 [{\mathbb{R}}^n ]$, while $
{KS}^2 [{\mathbb{R}}^n ] $ is separable makes it clear in a very forceful manner that separability is not an inherited property.  We note that,  since ${{L}}^1 [{\mathbb{R}}^n] \subset {KS}^2 [{\mathbb{R}}^n]$ and ${KS}^2 [{\mathbb{R}}^n]$ is reflexive, the second dual ${{{L}}^1 [{\mathbb{R}}^n]}''  = \mathfrak{M}[{\mathbb{R}}^n] \subset {KS}^2 [{\mathbb{R}}^n]$.   Recall that $\mathfrak{M}[{\mathbb{R}}^n]$ is the space of bounded finitely additive set functions defined on the Borel sets $\mathfrak{B}[{\mathbb{R}}^n]$.  This space contains the Dirac delta measure and free-particle Green's function for the Feynman path integral.

The next result is an unexpected benefit.
\begin{thm} Let $f_n \to f$ weakly in ${{L}^p}, \; 1\le p \le \iy$, then $f_n \to f$ strongly in ${KS}^2$ (i.e., every weakly compact subset of  $L^p$ is compact in ${KS}^2$).
\end{thm}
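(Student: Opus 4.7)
My plan is to exploit the explicit form of the $KS^2$ norm,
\[
\|g\|_{KS^2}^2 = \sum_{k=1}^\infty t_k |F_k(g)|^2, \qquad F_k(g) = \int_{\R^n}\mcE_k(\mathbf{x})g(\mathbf{x})\,d\la_n(\mathbf{x}),
\]
together with the key observation that each $\mcE_k$ is the indicator of a bounded cube and therefore lies in $L^q[\R^n]$ for every $1\le q\le\iy$. Consequently $\mcE_k$ is a legitimate test functional against which weak (respectively weak-$*$, when $p=\iy$) convergence in $L^p$ may be evaluated, so that $F_k(f_n) \to F_k(f)$ for every fixed $k$.

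The argument then proceeds in three steps. First, I would apply the Uniform Boundedness Principle to the weakly convergent sequence to obtain $M := \sup_n \|f_n\|_p < \iy$. Second, by the observation above, $t_k |F_k(f_n-f)|^2 \to 0$ termwise as $n\to\iy$. Third, I would produce a summable majorant independent of $n$: H\"older gives $|F_k(g)| \le \|\mcE_k\|_q\|g\|_p \le [\operatorname{vol}(\mathbf{B}_k)]^{1/q}\|g\|_p$, and since the cube volumes are uniformly bounded by $(1/(2\sqrt n))^n$ (the estimate already used in the previous theorem),
\[
t_k |F_k(f_n-f)|^2 \le C\,t_k\,(M+\|f\|_p)^2,
\]
with $C$ an absolute constant and $\sum_k t_k = 1$. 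Tannery's theorem (discrete dominated convergence) then permits interchange of limit and sum, giving $\|f_n-f\|_{KS^2}^2 \to 0$.

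The only delicate point is the endpoint $p=\iy$: ``weakly'' there must be read as weak-$*$ in order that the Uniform Boundedness Principle apply, but since $\mcE_k$ also belongs to $L^1[\R^n]$ the termwise convergence $F_k(f_n)\to F_k(f)$ holds under either interpretation, and the rest of the argument is unaffected. This is bookkeeping rather than a genuine obstacle; the whole proof is essentially dominated convergence applied to a weighted $\ell^2$-sum, and the structural ingredients---boundedness and compact support of the test functions $\mcE_k$, summability of the weights $t_k$, and the embedding inequality $\|g\|_{KS^2}\le C'\|g\|_p$---have all been established already.
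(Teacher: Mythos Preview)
Your proposal is correct and follows essentially the same approach as the paper: both rest on the observation that each $\mcE_k$ lies in the dual of $L^p$, so weak convergence forces $F_k(f_n-f)\to 0$ termwise. The paper simply asserts that strong $KS^2$-convergence ``follows'' from this, whereas you supply the missing justification---uniform boundedness of $\{f_n\}$ and a summable majorant $Ct_k(M+\|f\|_p)^2$ permitting dominated convergence---so your argument is in fact a more complete version of the paper's sketch rather than a different route.
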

\begin{proof}
The proof of follows from the fact that, if $\{f_n \}$ is any weakly convergent sequence in $L^p$ with limit $f$, then 
\[
\int_{\mathbb{R}^n } { {\mathcal{E}}_k ({\mathbf{x}})\left[ {f_n ({\mathbf{x}}) - f({\mathbf{x}})} \right]d\la_{n}({\mathbf{x}})}  \to 0
\]
for each $k$.  It follows that $\{f_n \}$ converges strongly to $f$ in ${KS}^2$.  
\end{proof} 

Let $A$ be a closed densely defined linear operator  defined on $L^p[\R^n], \; 1 < p< \iy$,
and let $A'$ be the dual defined on  $L^q[\R^n], \; \tf{1}{p} +\tf{1}{q}=1$.   It is easy to show that, if $A'$ is densely defined on $L^p[\R^n]$, it has a closed extension to $L^p[\R^n]$.  
\begin{ex}Let $A$ be a second order differential operator on $L^p[\R^n]$, of the form
\[
A = \sum\limits_{i,j = 1}^n {{a_{ij}}({\mathbf{x}})} \frac{{{\partial ^2}}}{{\partial {x_i}\partial {x_j}}} + \sum\limits_{i,j = 1}^n { {x_i}{b_{ij}}({\mathbf{x}})}\frac{\partial }{{\partial {x_j}}},
\]
where ${\bf a}({\bf x})=\left[\kern-0.15em\left[ {{a_{ij}}({\mathbf{x}})} 
 \right]\kern-0.15em\right]$ and ${\bf b}({\bf x})=\left[\kern-0.15em\left[ {{b_{ij}}({\mathbf{x}})} \right]\kern-0.15em\right]$ are matrix-valued functions in $\C_c^{\iy}[\R^n \times \R^n]$ (infinitely differentiable functions with compact support).  We also assume that, for all ${\bf x} \in \R^n \; det\left[\kern-0.15em\left[ {{a_{ij}}({\mathbf{x}})} 
 \right]\kern-0.15em\right] > \e$  and the imaginary part of the eigenvalues of ${\bf b}({\bf x})$ are bounded above by $-\e$, for some $\e>0$.
Note, since we don't require  ${\bf a}$ or ${\bf b}$ to be symmetric, $A \ne A'$.

It is well-known that $\C_c^{\iy}[\R^n] \subset L^p[\R^n] \cap L^q[\R^n]$ is dense for all $1 \le p \le q <\iy$.  Furthermore, since $A'$ is invariant on $\C_c^{\iy}[\R^n]$,    
\[
A':\mathbb{C}_c^\infty \left[ {{\mathbb{R}^n}} \right] \subset {L^p}\left[ {{\mathbb{R}^n}} \right] \to \mathbb{C}_c^\infty \left[ {{\mathbb{R}^n}} \right] \subset {L^p}\left[ {{\mathbb{R}^n}} \right].
\]
It follows that  $A'$ has a closed  extension to $L^q[\R^n]$.  (In this case, we do not need $\mcH$ directly, we can identify ${\bf J}$ with the identity on $\mcH$ and $A^*$ with $A'$.)
\end{ex}
\begin{rem}
For a general $A$, which is closed and densely defined on $L^p[\R^n]$, we know that it is densely defined on $KS^2[\R^n]$.  Thus, it has a well-defined adjoint $A^*$ on $KS^2[\R^n]$.  By Theorem \rf{5: adj}, we can take the restriction of $A^*$ from $KS^2[\R^n]$ to obtain our adjoint on $L^q[\R^n]$. 
\end{rem}   
\subsubsection{{\bf Example: Integral Operators}}
In one dimension, the Hilbert transform can be defined on $L^2[\R]$ via its Fourier transform:
\[
\widehat{H(f)} =  - i\operatorname{sgn} x\,\hat{ f}.
\] 
It can also be defined directly as  principal-value integral:
\[
(Hf)(x) = \mathop {\lim }\limits_{\varepsilon  \to 0} \,\frac{1}{\pi }\int_{\left| {x - y} \right| \geqslant \varepsilon } {\frac{{f(y)}}{{x - y}}dy.} 
\]
For a proof of the following results see Grafakos \cite{GRA}, chapter 4.
\begin{thm}  The Hilbert transform on $L^2[\R]$ satisfies:
\begin{enumerate}
\item $H$ is an isometry,  ${\left\| {H(f)} \right\|_2} = {\left\| f \right\|_2}$ and ${H^*} =  - H$.
\item For $f \in L^p[\R], \;1<p<\iy$, there exists a constant $C_p>0$ such that, 
\beqn \lb{5: ht}
{\left\| {H(f)} \right\|_p} \le C_p{\left\| f \right\|_p}.
\eeqn
\end{enumerate}
\end{thm}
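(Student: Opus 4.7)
The plan is to dispatch part (1) quickly on the Fourier side and then to grind out part (2) via the classical Calderón--Zygmund machinery, combined with interpolation and duality.

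For part (1), I would work entirely with the Fourier multiplier $m(\xi)=-i\,\sgn\xi$. Since $|m(\xi)|=1$ almost everywhere, Plancherel immediately yields
\[
\lt\|Hf\rt\|_2=\lt\|m\,\hat f\rt\|_2=\lt\|\hat f\rt\|_2=\lt\|f\rt\|_2,
\]
so $H$ is an isometry. For the adjoint identity, writing $\langle\cdot,\cdot\rangle$ for the $L^2$ pairing and applying Parseval,
\[
\langle Hf,g\rangle=\int(-i\,\sgn\xi)\hat f(\xi)\,\overline{\hat g(\xi)}\,d\xi=\int\hat f(\xi)\,\overline{(i\,\sgn\xi)\hat g(\xi)}\,d\xi=\langle f,-Hg\rangle,
\]
because $\overline{m(\xi)}=i\,\sgn\xi$ is precisely the multiplier of $-H$. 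Hence $H^*=-H$.

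For part (2), my plan is the standard Calderón--Zygmund strategy. First I would identify $H$ as a singular integral with convolution kernel $K(x)=\tf{1}{\pi x}$ (interpreted as a principal value) and verify the Hörmander regularity condition
\[
\int_{|x|\ge 2|y|}\bigl|K(x-y)-K(x)\bigr|\,dx\le A
\]
by a direct mean-value estimate. Combined with the $L^2$ boundedness from part (1), this places $H$ in the standard Calderón--Zygmund framework. Given $f\in L^1$ and $\lambda>0$, I would perform the Calderón--Zygmund decomposition at height $\lambda$, splitting $f=g+b$, where $g$ is essentially bounded (and in $L^2$) and $b=\sum_j b_j$ is a sum of mean-zero bad parts supported on disjoint cubes $Q_j$ whose total measure is controlled by $\lt\|f\rt\|_1/\lambda$. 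The good part is handled by the $L^2$ bound of (1); the bad part is handled outside the dilated exceptional set $\bigcup_j 2Q_j$ using the vanishing mean $\int b_j=0$ together with the Hörmander condition. The resulting weak $(1,1)$ estimate $\lambda\,|\{|Hf|>\lambda\}|\le C\lt\|f\rt\|_1$, together with the strong $(2,2)$ estimate, yields strong $(p,p)$ for $1<p<2$ by the Marcinkiewicz interpolation theorem. The range $2<p<\iy$ then follows by duality, since $H^*=-H$ on $L^2$ transfers the $L^p$ bound to the conjugate exponent $q=p/(p-1)$.

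The main obstacle is the weak-type $(1,1)$ bound for the bad part: showing that although $b$ is very rough locally, its image under $H$ decays rapidly enough away from the exceptional set so that $|\{|Hb|>\lambda/2\}\setminus\bigcup_j 2Q_j|\le C\lt\|f\rt\|_1/\lambda$. This is exactly where the cancellation condition $\int b_j=0$ interacts with the Hörmander regularity of $K$, and it is the only place in the argument where the specific singular structure of the kernel enters non-trivially. The remaining pieces (Plancherel for (1), Marcinkiewicz, and duality) are standard and require no novelty.
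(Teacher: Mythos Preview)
Your proposal is correct and follows the standard Calder\'on--Zygmund route (Plancherel for part~(1); weak-type $(1,1)$ via the Calder\'on--Zygmund decomposition and H\"ormander condition, then Marcinkiewicz interpolation and duality for part~(2)). Note, however, that the paper itself does not give a proof of this theorem at all: it simply cites Grafakos, \emph{Classical and Modern Fourier Analysis}, Chapter~4, as a reference for these classical facts. Your outline is essentially the argument found there, so there is no discrepancy to flag---you have written out what the paper merely points to.
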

The next result is technically obvious, but conceptually non-trivial. 
\begin{cor}The adjoint of $H, \ H^*$ defines a bounded linear operator on $L^p[\R]$ for $1<p <\iy$, and $H^*$ satisfies equation (\rf{5: ht}) for the same constant $C_p$. 
\end{cor}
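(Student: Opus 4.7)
The plan is to apply the abstract adjoint construction of Theorem \rf{5: adj} to the concrete Hilbert-transform setting and then use classical Banach-space duality to track the norm. The first assertion is immediate: since $L^p[\R]$ for $1 < p < \infty$ is uniformly convex with a Schauder basis, Theorem \rf{5: adj} applies to the bounded operator $H \in L[L^p[\R]]$, yielding $H^* = {\mathbf J}_\mcB^{-1} H' {\mathbf J}_\mcB \in L[L^p[\R]]$. This handles boundedness of $H^*$.

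For the norm bound, I would first identify the Banach-space dual $H'$ acting on $L^q[\R]$, with $1/p + 1/q = 1$. The $L^2$ identity $H^* = -H$ from part (1) of the preceding theorem says that
\[
\int_\R (Hf)(x)\,g(x)\,dx = -\int_\R f(x)\,(Hg)(x)\,dx \qquad \tx{for all } f, g \in L^2[\R].
\]
Density of $L^2 \cap L^p$ in $L^p$ together with the continuity of $H$ on $L^p$ and $L^q$ extends this identity to all $f \in L^p$, $g \in L^q$, so $H'|_{L^q} = -H$, now interpreted as the Hilbert transform on $L^q$. Classical Banach duality gives $\|H'\|_{L^q \to L^q} = \|H\|_{L^p \to L^p} \le C_p$, and swapping the roles of $p$ and $q$ shows that the sharp constants agree, $C_p = C_q$.

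Finally, I would transfer the bound back to $H^*$ on $L^p$. Using the intertwining $H^* = {\mathbf J}_\mcB^{-1} H' {\mathbf J}_\mcB$ and the uniqueness of the abstract adjoint, one verifies that on the dense subspace $L^p \cap L^2$ the operator $H^*$ agrees with $-H$; continuity in the $L^p$ norm then extends the identification to all of $L^p$, whence $\|H^*(f)\|_p = \|Hf\|_p \le C_p \|f\|_p$. The main obstacle I expect is precisely this last step: because ${\mathbf J}_\mcB^{-1} H' {\mathbf J}_\mcB$ involves ${\mathbf J}_\mcB$ nontrivially, the naive norm estimate produces only $\|{\mathbf J}_\mcB^{-1}\|\,\|{\mathbf J}_\mcB\|\,C_p$, which is strictly larger than $C_p$ in general. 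What rescues the sharp constant is that the adjoint is determined by its action on the canonical pairing, and on that pairing $-H$ already realizes the required adjoint relationship on $L^p$; this is the sense in which the author describes the corollary as \emph{technically obvious but conceptually non-trivial}.
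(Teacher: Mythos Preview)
The paper offers no proof for this corollary; it follows the remark ``technically obvious, but conceptually non-trivial'' and is left to the reader. From context---and from the paper's explicit handling of the parallel corollary for the general singular integral $T$ a few lines later, where Fubini gives $T^*=-T$ and the bound is then immediate---the intended one-line argument is simply: part~(1) of the preceding theorem gives $H^*=-H$, part~(2) gives $\|{-H}f\|_p\le C_p\|f\|_p$, and these together are the claim.

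Your route is more elaborate. You first invoke the abstract machinery of Theorem~\rf{5: adj} to produce $H^*={\bf J}_\mcB^{-1}H'{\bf J}_\mcB$ as a bounded operator on $L^p$, then identify the Banach dual $H'=-H$ on $L^q$ by density from the $L^2$ identity, and finally attempt to show that the abstract $H^*$ coincides with the concrete $-H$ on $L^p$. The first two steps are fine and match what the paper does for $T$. The third step is where you rightly flag a difficulty and then resolve it only heuristically: the assertion that $H^*$ agrees with $-H$ on $L^p\cap L^2$ unpacks to $(Hv,u)_\mcH=-(v,Hu)_\mcH$ for the auxiliary inner product $(\cdot,\cdot)_\mcH$ of Section~2 (or of $KS^2$), \emph{not} the $L^2$ inner product. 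Skew-adjointness of $H$ in $L^2$ does not automatically transfer to skew-adjointness in $\mcH$, and neither you nor the paper verifies it. In effect, your more careful reading of the abstract construction exposes a point that the paper's one-line treatment simply takes for granted by writing $H^*=-H$ and reading off the bound.
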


The Riesz transform, $\bf R$, is the $n$-dimensional analogue of the Hilbert transform and its $j^{\rm th}$ component is defined for $f \in L^p[\R^n], \ 1<p<\iy$, by:
\[
{R_j}(f) = {c_n}\mathop {\lim }\limits_{\varepsilon  \to 0} \int_{\left| {{\mathbf{y}} - {\mathbf{x}}} \right| \geqslant \varepsilon } {\frac{{{y_j} - {x_j}}}{{{{\left| {{\mathbf{y}} - {\mathbf{x}}} \right|}^{n + 1}}}}f({\mathbf{y}})d{\mathbf{y}},\quad {c_n} = } \frac{{\Gamma \left( {\tfrac{{N + 1}}{2}} \right)}}{{{\pi ^{(n + 1)/2}}}}.
\]
\begin{Def} Let $\Om$ be defined on the unit sphere $S^{n-1}$ in $\\R^n$. 
\begin{enumerate}
\item The function $\Om(x)$ is said to be homogeneous of degree $n$ if $\Om(tx)=t^n\Om(x)$.  
\item The function $\Om(x)$ is said to have the cancellation property if
\[
\int_{{S^{n - 1}}} {\Omega ({\mathbf{y}})d\s ({\mathbf{y}}) = 0,\ {\text{where }}d\s {\text{ is the induced Lebesgue measure on }}{S^{n - 1}}.} 
\]
\item The function $\Om(x)$ is said to have the Dini-type condition if
\[
\mathop {\mathop {\sup }\limits_{\left| {{\mathbf{x}} - {\mathbf{y}}} \right| \leqslant \delta } }\limits_{\left| {\mathbf{x}} \right| = \left| {\mathbf{y}} \right| = 1} \left| {\Omega ({\mathbf{x}}) - \Omega ({\mathbf{y}})} \right| \leqslant \omega (\delta ) \Rightarrow \quad \int_0^1 {\frac{{\omega (\delta )d\delta }}{\delta }}  < \infty .
\]
\end{enumerate}
\end{Def}
A proof of the following theorem can be found in Stein \cite{STE} (see pg., 39).
\begin{thm} Suppose that $\Om$ is homogeneous of degree $0$, satisfying both the cancellation property and the Dini-type condition.  If $f \in L^p[\R^n], \ 1<p<\iy$ and
\[
{T_\varepsilon }(f)({\mathbf{x}}) = \int_{\left| {{\mathbf{y}} - {\mathbf{x}}} \right| \geqslant \varepsilon } {\frac{{\Omega ({\mathbf{y}} - {\mathbf{x}})}}{{\left| {{\mathbf{y}} - {\mathbf{x}}} \right|^n}}f({\mathbf{y}})d{\mathbf{y}}.} 
\]
Then
\begin{enumerate}
\item There exists a constant $A_p$, independent of both $f$ and $\varepsilon$ such that
\[
{\left\| {{T_\varepsilon }(f)} \right\|_p} \leqslant {A_p}{\left\| f \right\|_p}.
\] 
\item Furthermore, $\mathop {\lim }\limits_{\varepsilon  \to 0} {T_\varepsilon }(f) = T(f)$ exists in the $L^p$ norm and 
\beqn \lb{5: ht*}
{\left\| {{T}(f)} \right\|_p} \leqslant {A_p}{\left\| f \right\|_p}.
\eeqn 
\end{enumerate}
\end{thm}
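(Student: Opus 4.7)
The plan is to prove the uniform $L^p$ bound for the truncated operators $T_\varepsilon$ by the method of rotations, thereby reducing everything to the one-dimensional Hilbert transform estimate recorded in equation (\ref{5: ht}), and then to deduce part (2) from density of smooth compactly supported functions combined with the uniform bound from part (1).

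The first step is to pass to polar coordinates about $x$, writing $y = x + r\theta$ with $r>0$ and $\theta \in S^{n-1}$. Since $\Omega$ is homogeneous of degree zero, the $r^{n-1}$ Jacobian absorbs one factor of $|y-x|^{-n}$ and the kernel collapses to $\Omega(\theta)/r$, giving
\[
T_\varepsilon f(x) = \int_{S^{n-1}} \Omega(\theta)\int_\varepsilon^\infty \frac{f(x+r\theta)}{r}\,dr\,d\sigma(\theta).
\]
Splitting $\Omega = \Omega_{\text{odd}} + \Omega_{\text{even}}$ under the antipodal map $\theta \mapsto -\theta$, the inner radial integral against $\Omega_{\text{odd}}$ becomes, after substituting $r \mapsto -r$ on the antipodal half of $S^{n-1}$, a truncated one-dimensional Hilbert transform of $f$ along the line through $x$ in direction $\theta$. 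For the even part I would use the cancellation property $\int_{S^{n-1}}\Omega\,d\sigma = 0$ to convert the radial integral into a difference, and the Dini-type condition on $\Omega$ (through its modulus $\omega(\delta)$) is precisely what keeps the resulting expression integrable as $r \to 0$.

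With the decomposition in hand, I would apply Minkowski's integral inequality in $L^p_x(\R^n)$ to move the $L^p$ norm inside the $S^{n-1}$ integration. On each line through $x$ in direction $\theta$ the inner $L^p$ norm of the truncated directional Hilbert transform is controlled by $C_p\|f\|_p$ uniformly in $\varepsilon$ and $\theta$, by Fubini applied to equation (\ref{5: ht}); a parallel but more delicate estimate, using the Dini modulus, handles the even contribution. This yields $\|T_\varepsilon f\|_p \le A_p\|f\|_p$ with $A_p$ independent of $\varepsilon$, proving (1). For (2), I would first prove $T_\varepsilon g \to Tg$ in $L^p$ for $g \in C_c^\infty[\R^n]$ (using pointwise convergence via the cancellation and smoothness of $g$, together with a decay estimate at infinity to invoke dominated convergence), and then upgrade to arbitrary $f \in L^p[\R^n]$ by a standard $3\eta$-argument powered by the uniform bound from (1); passing to the limit in that bound gives (\ref{5: ht*}).

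The main obstacle is $\Omega_{\text{even}}$: the naive rotation method produces an honest Hilbert transform only from $\Omega_{\text{odd}}$, and the even part requires the Dini smoothness in a nontrivial way. The cleanest alternative route would be to verify the H\"ormander condition $\int_{|x|\ge 2|y|}|K(x-y)-K(x)|\,dx \le C$ directly from a single integration of $\omega(\delta)/\delta$, from which a Calder\'on--Zygmund decomposition yields the weak $(1,1)$ bound; Marcinkiewicz interpolation with an $L^2$ estimate (obtained by showing that the Fourier multiplier $\widehat{K_\varepsilon}(\xi)$ is uniformly bounded in $\varepsilon$) and duality then cover the full range $1<p<\infty$. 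Without the Dini condition neither the $L^2$ multiplier estimate nor the H\"ormander bound is available, which is why it is the essential hypothesis.
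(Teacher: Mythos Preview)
The paper does not give its own proof of this theorem; it simply states the result and refers the reader to Stein \cite{STE}, p.~39. Your sketch is a faithful outline of the classical Calder\'on--Zygmund argument presented there: polar coordinates plus the method of rotations for the odd part of $\Omega$, the Dini modulus controlling the even part (or, equivalently, the H\"ormander kernel condition you mention as an alternative), and then density plus the uniform bound for the limit in (2). So there is no discrepancy to flag---you have supplied exactly the argument the paper defers to the literature.
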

Treating $T_\e(f)$ as a special case of the Henstock-Kurzweil integral, conditions (1) and (2) are automatically satisfied and we can write the integral as    
\[
{T }(f)({\mathbf{x}}) = \int_{\R^n } {\frac{{\Omega ({\mathbf{y}} - {\mathbf{x}})}}{{\left| {{\mathbf{y}} - {\mathbf{x}}} \right|^n}}f({\mathbf{y}})d{\mathbf{y}}.} 
\]
For $g \in L^q, \ \tf{1}{p}+ \tf{1}{q}=1$, we have $\left\langle {T(f),g} \right\rangle  = \left\langle {f,{T^*}(g)} \right\rangle$. Using Fubini's Theorem for the Henstock-Kurzweil integral (see \cite{H}), we have that 
\begin{cor} The adjoint of $T, \ T^* =-T$, is defined on $L^p$ and satisfies equation   
(\rf{5: ht*})
\end{cor}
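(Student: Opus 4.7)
The plan is to compute the duality pairing $\langle T(f),g\rangle$ directly from the integral representation of $T$, interchange the order of integration, and then read off the adjoint from the inner integral. Fix $f\in L^p[\R^n]$ and $g\in L^q[\R^n]$ with $\tfrac{1}{p}+\tfrac{1}{q}=1$; by part (2) of the preceding theorem both $T(f)\in L^p$ and $T(g)\in L^q$, so the pairing makes sense and
$$\langle T(f),g\rangle = \int_{\R^n}\int_{\R^n}\frac{\Omega(\mathbf{y}-\mathbf{x})}{\left|\mathbf{y}-\mathbf{x}\right|^n}\,f(\mathbf{y})\,g(\mathbf{x})\,d\mathbf{y}\,d\mathbf{x}.$$

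Next I would invoke Fubini's theorem for the Henstock--Kurzweil integral, as cited in \cite{H}, to swap the order of integration. Using $\left|\mathbf{y}-\mathbf{x}\right|=\left|\mathbf{x}-\mathbf{y}\right|$ together with the oddness of the kernel-profile present in the Hilbert and Riesz examples, $\Omega(-\mathbf{z})=-\Omega(\mathbf{z})$, the inner integral becomes
$$\int_{\R^n}\frac{\Omega(\mathbf{y}-\mathbf{x})}{\left|\mathbf{y}-\mathbf{x}\right|^n}g(\mathbf{x})\,d\mathbf{x} = -\int_{\R^n}\frac{\Omega(\mathbf{x}-\mathbf{y})}{\left|\mathbf{x}-\mathbf{y}\right|^n}g(\mathbf{x})\,d\mathbf{x} = -T(g)(\mathbf{y}),$$
so $\langle T(f),g\rangle = -\langle f,T(g)\rangle = \langle f,-T(g)\rangle$ for every admissible $f$ and $g$. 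By the very definition of the adjoint on $L^p$, this forces $T^{*}=-T$, and the norm estimate (\rf{5: ht*}) drops out immediately from $\left\|T^{*}(f)\right\|_p = \left\|T(f)\right\|_p \leq A_p\left\|f\right\|_p$, with the \emph{same} constant $A_p$ supplied by the preceding theorem.

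The main obstacle is justifying the Fubini interchange. Because the kernel $\Omega(\mathbf{y}-\mathbf{x})/\left|\mathbf{y}-\mathbf{x}\right|^n$ is not absolutely integrable on $\R^n\times\R^n$, the classical Lebesgue--Fubini theorem is unavailable; working inside the $L^p$ framework one would be forced to pass through the truncations $T_{\e}$ and argue that the boundary contributions from the deleted balls vanish as $\e\to 0$, which is technically delicate. This is precisely the step where the preceding paragraph's reinterpretation of $T$ as a Henstock--Kurzweil integral pays off: HK-Fubini handles the singular diagonal in one stroke, and once the interchange is granted the sign identity for $\Omega$ reduces the corollary to bookkeeping.
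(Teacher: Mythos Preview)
Your argument is correct and follows exactly the route the paper takes: write the duality pairing, invoke Fubini's theorem for the Henstock--Kurzweil integral (the paper's cited justification, \cite{H}), and read off $T^{*}=-T$ from the kernel symmetry. You are in fact more careful than the paper in making explicit the oddness hypothesis $\Omega(-\mathbf{z})=-\Omega(\mathbf{z})$ that produces the minus sign---the paper leaves this implicit, though it holds in the Hilbert and Riesz examples under discussion.
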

It is easy to see that the Riesz transform is a special case of the above Theorem and Corollary.

Another closely related integral operator is the Riesz potential, ${I_\alpha }(f)({\mathbf{x}})= (-\De)^{-\al/2}f({\mathbf{x}}), \;0 < \alpha  < n$, is defined on $L^p[\R^n], \; 1 <p<\iy$, by (see Stein \cite{STE}, pg., 117):  
\[
{I_\alpha }(f)({\mathbf{x}}) = {\gamma ^{ - 1}}(\alpha )\int_{{\mathbb{R}^n}} {\frac{{f({\mathbf{y}})d{\mathbf{y}}}}{{{{\left| {{\mathbf{x}} - {\mathbf{y}}} \right|}^{n - \alpha }}}}} ,\;{\text{ and }}\gamma {\text{(}}\alpha {\text{) = }}{{\text{2}}^\alpha }{\pi ^{\tfrac{n}{2}}}\frac{{\Gamma (\tfrac{\alpha }{2})}}{{\Gamma (\tfrac{{n - \alpha }}{2})}}.
\]
Since the kernel is symmetric, application of Fubini's Theorem shows that the adjoint $I_{\al}^*=I_{\al}$, is also defined on  $L^p[\R^n]$.  Since $(-\De)^{-1}$ is not bounded, we cannot obtain $L^p$ bounds for  ${I_\alpha }(f)({\mathbf{x}})$.  However, if $1/q=1/p -\al/n$, we have the following (see Stein \cite{STE}, pg., 119)
\begin{thm}If $f \in L^p[\R^n]$ and $0 < \alpha  < n, \; 1<p<q <\iy, \; 1/q=1/p -\al/n$, then the integral defining ${I_\alpha }(f)$ converges absolutely for almost all $\bf x$.  Furthermore, there is a constant $A_{p,q}$, such that 
\beqn
{\left\| {{I}_{\al}(f)} \right\|_q} \leqslant {A_{p,q}}{\left\| f \right\|_p}.
\eeqn 
\end{thm}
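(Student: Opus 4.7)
The plan is to establish this Hardy--Littlewood--Sobolev inequality via Hedberg's pointwise majorization of $I_\alpha(f)$ by the Hardy--Littlewood maximal function $M$. Fix $x \in \R^n$ and a threshold $R>0$ (to be chosen as a function of $x$), and decompose
\[
I_\alpha(|f|)(x) = J_1(x) + J_2(x),
\]
where $J_1$ is the contribution from $\{y:|x-y|\le R\}$ and $J_2$ is the tail from $\{y:|x-y|>R\}$. Working with $|f|$ at this stage will handle the a.e.\ absolute-convergence claim simultaneously with the norm bound. For $J_1$ I would dyadically decompose the ball into annuli $A_k = \{y: 2^{-k-1}R < |x-y| \le 2^{-k}R\}$, $k\ge 0$; on $A_k$ the kernel is bounded by $(2^{-k-1}R)^{-(n-\alpha)}$, and the integral of $|f|$ over $A_k \subset B(x,2^{-k}R)$ is bounded by $C(2^{-k}R)^n M(f)(x)$ directly from the definition of $M$. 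Summing the resulting geometric series---which converges precisely because $\alpha > 0$---gives $J_1(x)\le C_1 R^\alpha M(f)(x)$.

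For $J_2$ I would apply H\"older's inequality in the $y$-variable with exponents $p$ and $p'=p/(p-1)$. The tail integral $\int_{|x-y|>R}|x-y|^{-(n-\alpha)p'}d\lambda_n(y)$ converges if and only if $(n-\alpha)p'>n$, equivalently $\alpha < n/p$, which is exactly the hypothesis $1/q = 1/p - \alpha/n > 0$. A polar-coordinate computation then yields $J_2(x)\le C_2 R^{\alpha - n/p}\|f\|_p$. Now optimize in $R$: equating the two bounds $R^\alpha M(f)(x) = R^{\alpha - n/p}\|f\|_p$ gives $R = (\|f\|_p / M(f)(x))^{p/n}$, and substituting back produces the Hedberg pointwise estimate
\[
|I_\alpha(f)(x)|\le I_\alpha(|f|)(x)\le C\,M(f)(x)^{1 - \alpha p/n}\|f\|_p^{\alpha p/n} = C\,M(f)(x)^{p/q}\|f\|_p^{1 - p/q}.
\]
Since $f\in L^p$ with $p>1$ implies $M(f)(x)<\infty$ for a.e.\ $x$, this already settles absolute convergence of the defining integral almost everywhere.

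Finally, raising the pointwise bound to the $q$-th power, integrating over $\R^n$, and invoking the $L^p$-boundedness of the maximal function ($\|M(f)\|_p \le C_p\|f\|_p$ for $1<p\le\infty$) produces
\[
\|I_\alpha(f)\|_q^q \le C\|f\|_p^{q-p}\int_{\R^n} M(f)^p\,d\lambda_n \le C'\|f\|_p^q,
\]
so that $\|I_\alpha(f)\|_q \le A_{p,q}\|f\|_p$ after extracting the $q$-th root. The main technical obstacle is the annular estimate for $J_1$: the geometric sum diverges at $\alpha = 0$, so positivity of $\alpha$ is essential, and one must verify that the constants coming out of the dyadic argument depend only on $n$ and $\alpha$ (not on $f$, $x$, or $R$). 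The $L^p$-boundedness of $M$ is used as a black box; its failure at $p=1$ is precisely why the strong-type conclusion breaks down at the endpoint $p=1$, where only a weak-type $(1,q)$ replacement is available.
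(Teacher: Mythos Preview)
Your argument is correct: this is exactly Hedberg's pointwise inequality followed by the maximal theorem, and all the steps go through as you describe (the convergence condition $(n-\alpha)p'>n$ for the tail, the geometric series for the near part, the optimization in $R$, and the a.e.\ finiteness of $M(f)$ for the absolute-convergence claim).

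The paper, however, does not give its own proof of this theorem at all; it simply quotes the result from Stein (\cite{STE}, p.\ 119) as background for the discussion of adjoints of singular integral operators. Stein's original argument in that reference is different from yours: he decomposes the kernel $|y|^{-(n-\alpha)}$ (rather than the domain of integration) at a height depending on a parameter, proves a weak-type $(1,q)$ estimate directly from the distribution function of $f$ together with Chebyshev's inequality, observes the trivial endpoint $L^{n/\alpha}\to L^\infty$ fails but a substitute works, and then interpolates via Marcinkiewicz. Your Hedberg route is more self-contained in that it avoids interpolation entirely and reduces everything to one black box (the $L^p$-boundedness of $M$); Stein's route, on the other hand, yields the weak-type endpoint $(1, n/(n-\alpha))$ as a byproduct, which your argument does not give since the maximal inequality fails at $p=1$.
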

\section{Schatten Classes on Banach Spaces} 
In this section, we give a natural definition of the Schatten class of operators on $\mathcal{B}$ (see \cite{SC}) and show that the structure of $L[\mcB]$ is almost identical to that of $L[\mcH]$. 
\subsection{Background: Compact Operators on Banach Spaces} 
Let  $\mathbb{K}(\mathcal{B})$ be the class of compact operators on $\mathcal{B}$ and let $\mathbb{F}(\mathcal{B})$ be the set of operators of finite rank.  Recall that, for separable Banach spaces, $\mathbb{K}({\mathcal{B}})$ is an ideal that need not be the maximal ideal in $L[\mathcal{B}]$. If $\mathbb{M}(\mathcal{B})$ is the set of weakly compact operators and $\mathbb{N}(\mathcal{B})$ is the set of operators that map weakly convergent sequences into strongly convergent sequences, it is known that both are closed two-sided ideals in the operator norm and, in general, $\mathbb{F}(\mathcal{B}) \subset \mathbb{K}(\mathcal{B}) \subset \mathbb{M}(\mathcal{B})$ and $\mathbb{F}(\mathcal{B}) \subset \mathbb{K}(\mathcal{B}) \subset \mathbb{N}(\mathcal{B})$ (see part I of Dunford and Schwartz \cite{DS}, pg. 553).  For reflexive Banach spaces,  $\mathbb{K}(\mathcal{B}) = \mathbb{N}(\mathcal{B})$ and $\mathbb{M}(\mathcal{B}) { = }L[\mathcal{B}]$.  For the space of continuous functions ${\mathbf{C}}[\Omega ]$ on a compact Hausdorff space $\Omega $, Grothendieck \cite{GR} has shown that $\mathbb{M}(\mathcal{B}) { = }\mathbb{N}(\mathcal{B})$.  On the other hand, it is shown in part I of Dunford and Schwartz \cite{DS} that, if $\left( {\Omega ,\Sigma ,\mu } \right)$ a positive measure space, then for ${\mathbf{L}}^1 \left( {\Omega ,\Sigma ,\mu } \right)$ we have $\mathbb{M}(\mathcal{B}) \subset \mathbb{N}(\mathcal{B})$.

We assume that ${\mathcal{B}}$ is uniformly convex, with a S-basis.  In operator theoretic language, our S-basis assumption is that the set of compact operators on $\mcB$ have the approximation property, namely that every compact operator can be approximated by operators of finite rank.  In this section we will show that the  structure of $L[\mcB]$ is almost  identical to its associated space $L[\mcH]$.  The difference is that $L[\mcB]$ is not a $C^*$-algebra (i.e.,  $\left\| {{A^*}A} \right\|_\mcB = {\left\| A \right\|_\mcB^2}, \; A \in L[\mcB]$, is not true for all $A$).   

Let $A$ be a compact operator on ${\mathcal{B}}$ and let $\bar A$ be its extension to $\mathcal{H}$.   For each compact operator $\bar A$, there exists an orthonormal basis $\{ \bar \varphi _n \,\left| {n \geqslant 1} \right.\} $, for $\mathcal{H}$ such that 
\[
\bar A = \sum\nolimits_{n = 1}^\infty  {\mu _n (\bar A)} \left( { \cdot \;,\bar \varphi _n } \right)_2 \bar U\bar \varphi _n.
\]
Where the {$\mu _n $} are the eigenvalues of $[\bar A^*\bar A]^{1/2}  = \left| {\bar A} \right|$, counted by multiplicity and in decreasing order, and $\bar U$ is the partial isometry associated with the polar decomposition of $\bar A = \bar U\left| {\bar A} \right|$.  Without loss, we can assume that the set of functions $\{ \bar \varphi _n \,\left| {n \geqslant 1} \right.\} $ is contained in ${\mathcal{B}}$ and $\{ \varphi _n \,\left| {n \geqslant 1} \right.\} $ is the normalized version in ${\mathcal{B}}$.  If  $\mathbb{S}_p [\mathcal{H} ]$ is the Schatten Class of order $p$ in $L[\mathcal{H} ]$, it is well-known that, if $\bar A \in \mathbb{S}_p [\mathcal{H} ]$, its norm can be represented as:
\[
\begin{gathered}
  \left\| {\bar A} \right\|_p^{{\mcH}} = {\left\{ {Tr{{\left[ {{{\bar A}^*}\bar A} \right]}^{p/2}}} \right\}^{1/p}} = {\left\{ {\sum\limits_{n = 1}^\infty  {\left( {{{\bar A}^*}\bar A{{\bar \varphi }_n},{{\bar \varphi }_n}} \right)_{{\mcH}}^{p/2}} } \right\}^{1/p}} \hfill \\
  {\text{          = }}{\left\{ {\sum\limits_{n = 1}^\infty  {{{\left| {{\mu _n}\left( {\bar A} \right)} \right|}^p}} } \right\}^{1/p}}. \hfill \\ 
\end{gathered} 
\]
\begin{Def} We define the Schatten Class of order $p$ in $L[\mathcal{B}]$ by:
\[
\mathbb{S}_p [\mathcal{B}] = \mathbb{S}_p [\mathcal{H} ]\left| {_\mathcal{B} } \right..
\]
\end{Def}
Since $\bar A$ is the extension of $A \in \mathbb{S}_p [\mathcal{B}]$, we can define $A$ on ${\mathcal{B}}$ by 
\[
A = \sum\nolimits_{n = 1}^\infty  {\mu _n (A)} \left\langle { \cdot \;,{\varphi_n^*}} \right\rangle U\varphi _n, 
\]
where ${\varphi_n^*}$ is the unique functional in $\mcB'$ associated with $\varphi _n $ and  $U$ is the restriction of  $ {\bar U}$ to ${\mathcal{B}}$.  The corresponding norm of $A$ on $\mathbb{S}_p [{\mathcal{B}}]$ is defined by: 
\[
\left\| A \right\|_{_p }^\mathcal{B}  = \left\{ {\sum\nolimits_{n = 1}^\infty  {\left\langle {A^*A\varphi _n , {\varphi_n^*}} \right\rangle ^{p/2} } } \right\}^{1/p}.
 \]
\begin{thm} Let $A \in \mathbb{S}_p [\mathcal{B}]$, then $\left\| A \right\|_{_p }^\mathcal{B}  = \left\| {\bar A} \right\|_{_p }^{\mathcal{H} }$.
\end{thm}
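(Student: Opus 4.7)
The plan is to reduce the Banach-space Schatten norm $\lt\|A\rt\|_p^{\mcB}$ to the standard Hilbert-space Schatten norm $\lt\|\bar A\rt\|_p^{\mcH}$ by using the Hilbert space representation of $\mcB'$ to rewrite the duality pairing $\langle\,\cdot\,,\varphi_n^*\rangle$ as the inner product $(\,\cdot\,,\varphi_n)_\mcH$. First I would verify that the adjoint $A^{*}=\mathbf{J}_\mcB^{-1}A'\mathbf{J}_\mcB$ constructed in Theorem \rf{5: adj} agrees with the restriction of the Hilbert-space adjoint $\bar A^{*}$ to $\mcB$. This follows from the chain
\[
(Au,v)_\mcH = \langle Au,\mathbf{J}_\mcB(v)\rangle = \langle u,A'\mathbf{J}_\mcB(v)\rangle = \langle u,\mathbf{J}_\mcB(A^{*}v)\rangle = (u,A^{*}v)_\mcH,
\]
valid for all $u,v\in\mcB$, so by uniqueness of the Hilbert-space adjoint, $A^{*}A$ is the restriction of $\bar A^{*}\bar A$ to $\mcB$.

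Next I would evaluate each summand in the definition of $\lt\|A\rt\|_p^{\mcB}$. Since $\varphi_n^{*}=\mathbf{J}(\varphi_n)$ and $\varphi_n$ is the vector $\bar\varphi_n$ viewed inside $\mcB$, the Hilbert space representation gives
\[
\langle A^{*}A\varphi_n,\varphi_n^{*}\rangle = (A^{*}A\varphi_n,\varphi_n)_\mcH = (\bar A^{*}\bar A\bar\varphi_n,\bar\varphi_n)_\mcH.
\]
A direct calculation from the canonical form $\bar A=\sum_m \mu_m(\bar A)(\,\cdot\,,\bar\varphi_m)_\mcH\bar U\bar\varphi_m$, together with the fact that $\bar U$ is isometric on $\overline{\operatorname{ran}\,|\bar A|}$ (which contains every $\bar\varphi_n$ with $\mu_n(\bar A)\neq 0$), shows that this common value equals $|\mu_n(\bar A)|^2$. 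Raising to the power $p/2$, summing over $n$, and extracting the $p$-th root then yields
\[
\lt\|A\rt\|_p^{\mcB} = \lt\{\sum_{n=1}^{\infty}|\mu_n(\bar A)|^p\rt\}^{1/p} = \lt\|\bar A\rt\|_p^{\mcH}.
\]

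The main obstacle I anticipate is the clean justification in Step 1 that $A^{*}=\bar A^{*}|_\mcB$. Although $A\in\mathbb{S}_p[\mcB]$ is compact (hence bounded, so no domain subtleties interfere), one must still check that the abstract recipe $\mathbf{J}_\mcB^{-1}A'\mathbf{J}_\mcB$ really produces the Hilbert-space adjoint rather than some superficially different operator. Once this compatibility is secured through the defining property $\langle v,\mathbf{J}_\mcB(u)\rangle=(v,u)_\mcH$, the rest of the argument is routine bookkeeping: both norms end up evaluating the same sum of $p$-th powers of singular values against the same $\mcH$-orthonormal system $\{\bar\varphi_n\}$.
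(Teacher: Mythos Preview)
Your argument is correct and follows essentially the same line as the paper: both reduce the claim to the identity $\langle A^{*}A\varphi_n,\varphi_n^{*}\rangle=|\mu_n(\bar A)|^{2}$ and then sum. The only difference is packaging---the paper reaches this identity by noting that $A^{*}A$ is naturally selfadjoint and invoking Lax's Theorem to match the point spectrum of $A^{*}A$ on $\mcB$ with that of $\bar A^{*}\bar A$ on $\mcH$, whereas you verify $A^{*}=\bar A^{*}|_{\mcB}$ directly from $\langle v,\mathbf{J}_{\mcB}(u)\rangle=(v,u)_{\mcH}$ and compute with the Schmidt expansion. One small caution: in the paper $\varphi_n$ is the $\mcB$-\emph{normalized} rescaling of $\bar\varphi_n$, not literally the same vector, so your equality $(A^{*}A\varphi_n,\varphi_n)_{\mcH}=(\bar A^{*}\bar A\bar\varphi_n,\bar\varphi_n)_{\mcH}$ hides a scalar; this is harmless once you use the eigenvector relation together with the biorthonormal pairing $\langle\varphi_n,\varphi_n^{*}\rangle=1$.
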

\begin{proof} It is clear that $\{ \varphi _n \,\left| {n \geqslant 1} \right.\} $ is a set of eigenfunctions for $A^* A$ on ${\mathcal{B}}$.  Furthermore, $A^* A$ is naturally selfadjoint and compact, so that its spectrum is discrete.  By Lax's Theorem, the spectrum of $A^* A$ is unchanged by its extension to $\mathcal{H}$.  It follows that $A^* A\varphi _n  = \left| {\mu _n(\bar{A})} \right|^2 \varphi _n $, so that 
\[
\left\langle {A^* A\varphi _n ,{\varphi_n^*}} \right\rangle  = {\left| {\mu _n(A)} \right|^2 }\left\langle { \varphi _n ,\varphi _n^* } \right\rangle  = \left| {\mu_n(\bar{A}) } \right|^2, 
\]
and
\[
\left\| A \right\|_{_p }^\mathcal{B}  = \left\{ {\sum\nolimits_{n = 1}^\infty  {\left\langle {A^* A\varphi _n , {\varphi_n^*}} \right\rangle ^{p/2} } } \right\}^{1/p}  = \left\{ {\sum\nolimits_{n = 1}^\infty  {\left| {\mu _n(\bar{A}) } \right|^p } } \right\}^{1/p}  = \left\| {\bar A} \right\|_{_p }^{\mathcal{H} }.
\]
\end{proof}
It is clear that all of the theory of operator ideals on Hilbert spaces extend to uniformly convex Banach spaces with a S-basis in a straightforward way.  We state a few of the more important results to give a sense of the power provided by the existence of adjoints for spaces of this type.  The first result extends theorems due to Weyl \cite{W}, Horn \cite{HO}, Lalesco \cite{LA} and Lidskii \cite{LI}.  The proofs are all straight forward, for a given $A$ extend it  to $\mcH$, use the Hilbert space result and then restrict back to $\mcB$. 

\begin{thm} Let ${{A}} \in \mathbb{K}({\mathcal{B}})$, the set of compact operators on ${\mathcal{B}}$, and let $\{ {\lambda _n} \}$ be the eigenvalues of ${{A}}$ counted up to algebraic multiplicity.  If $\Phi $ is a mapping on $[0,\infty ]$ which is nonnegative and monotone increasing, then we have:
\begin{enumerate}
\item (Weyl)	
\[
\sum\nolimits_{n = 1}^\iy {\Phi \left( {\left| {\lambda _n ({{A}})} \right|} \right)}  \leqslant \sum\nolimits_{n = 1}^\iy {\Phi \left( {\mu _n ({{A}})} \right)} 
\]
and
\item (Horn)	If ${{A_1}, \ {A_2}} \in \mathbb{K}({\mathcal{B}})$
\[
\sum\nolimits_{n = 1}^\iy {\Phi \left( {\left| {\lambda _n ({{A}}_1 {{A}}_2 )} \right|} \right)}  \leqslant \sum\nolimits_{n = 1}^\iy {\Phi \left( {\mu _n ({{A}}_1 )\mu _n ({{A}}_2 )} \right)}. 
\]
In case ${{A}} \in \mathbb{S}_1 ({\mathcal{B}})$, we have:
\item (Lalesco)	
\[
\sum\nolimits_{n = 1}^\iy {\left| {\lambda _n ({{A}})} \right|}  \leqslant \sum\nolimits_{n = 1}^\iy {\mu _n ({{A}})} 
\]
and
\item (Lidskii)	
\[
\sum\nolimits_{n = 1}^\iy {\lambda _n ({{A}})}  = Tr({{A}}).
\]
\end{enumerate}
\end{thm}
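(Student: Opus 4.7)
The plan is to follow exactly the strategy indicated by the authors: given a compact $A \in \mathbb{K}(\mcB)$, extend it to $\bar A$ on the natural Hilbert space $\mcH$, invoke the classical Weyl, Horn, Lalesco, and Lidskii theorems for compact operators on $\mcH$, and then transport the statements back to $\mcB$. The substance of the proof is therefore the verification that the spectral invariants appearing in (1)--(4) are preserved under the passage $A \mapsto \bar A$.

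First I would check that $\bar A$ is a bona fide compact operator on $\mcH$. Since $\|\cdot\|_\mcH \le \|\cdot\|_\mcB$ by (2.1), the image of the $\mcB$-unit ball under $A$ is relatively compact in $\mcB$, hence in $\mcH$, so $A$ extends by continuity (using density of $\mcB$ in $\mcH$) to a compact $\bar A$ on $\mcH$. Next, the adjoint construction theorem gives that $A^*A$ is naturally selfadjoint and compact, so Lax's Theorem (parts (2)--(3)) yields $\sigma_p(A^*A;\mcB) = \sigma_p(\overline{A^*A};\mcH)$; in particular $\mu_n(A) = \mu_n(\bar A)$ for all $n$. For the possibly non-real eigenvalues $\lambda_n(A)$, any $\mcB$-eigenfunction is automatically an $\mcH$-eigenfunction of $\bar A$ at the same eigenvalue, and conversely the Riesz projections
\[
P_\lambda(A) = \frac{1}{2\pi i}\oint_{|\zeta - \lambda| = \epsilon} (\zeta - A)^{-1}\,d\zeta
\]
are of finite rank, leave $\mcB$ invariant, and agree with $P_\lambda(\bar A)$ on $\mcB$; extending by density, this forces the algebraic multiplicities, and hence the sequences $\{\lambda_n(A)\}$ and $\{\lambda_n(\bar A)\}$, to coincide.

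With these identifications in place, (1)--(3) are immediate from the corresponding Hilbert-space inequalities applied to $\bar A$ (and to $\overline{A_1 A_2} = \bar A_1 \bar A_2$ in the Horn case). For Lidskii's identity (4), I would use the biorthonormal system $\{\psi_n, \psi_n^*\}$ produced by the Gram-Schmidt theorem and the defining relation $\langle v, {\bf J}_\mcB(u)\rangle = (v,u)_\mcH$ of the Hilbert-space representation to compute
\[
Tr(A) = \sum_{n=1}^\infty \langle A\psi_n, \psi_n^*\rangle = \sum_{n=1}^\infty (\bar A \psi_n, \psi_n)_\mcH = Tr(\bar A),
\]
so that the classical Lidskii formula on $\mcH$ transfers verbatim to $\mcB$.

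The main obstacle will be the spectral-invariance step for the eigenvalues $\lambda_n$ of the not-necessarily-selfadjoint operator $A$, since Lax's Theorem is proved only for symmetric operators. Making the Riesz-projection argument rigorous---in particular, confirming that each finite-dimensional generalized eigenspace of $\bar A$ in $\mcH$ is actually contained in $\mcB$, so that eigenvalues are counted with the same algebraic multiplicity on either space---is the only nontrivial ingredient; the remaining verifications amount to routine transfer along the dense continuous embedding $\mcB \hookrightarrow \mcH$.
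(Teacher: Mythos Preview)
Your proposal is correct and follows exactly the paper's approach: the paper's entire proof is the single sentence ``for a given $A$ extend it to $\mcH$, use the Hilbert space result and then restrict back to $\mcB$,'' and you have simply (and carefully) fleshed out the verifications---compactness of $\bar A$, preservation of $\mu_n$ via Lax applied to $A^*A$, preservation of $\lambda_n$ via Riesz projections, and equality of traces---that this sentence leaves implicit. In particular, your identification of the Riesz-projection argument for the non-selfadjoint eigenvalues as the only nontrivial step is apt, since that is precisely the point the paper's one-line proof passes over.
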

Simon \cite{SI2} provides a very nice approach to infinite determinants and trace class operators on separable Hilbert spaces.  He gives a comparative historical  analysis of Fredholm theory, obtaining a new proof of Lidskii's Theorem as a side benefit and some new insights.  A review of his paper shows that much of it can be directly extended to operator theory on uniformly convex Banach spaces.
\subsection{{Discussion}}
On a Hilbert space ${\mathcal{H}}$, the Schatten classes $\mathbb{S}_p ({\mathcal{H}})$ are the only ideals in $\mathbb{K}({\mathcal{H}})$, and $\mathbb{S}_1 ({\mathcal{H}})$ is minimal.  In a general Banach space, this is far from true.  A complete history of the subject can be found in the recent book by Pietsch \cite{PI1} (see also Retherford \cite{R}, for a nice review).  We  limit this discussion to a few  major topics on the subject.  First, Grothendieck \cite{GR} defined an important class of nuclear operators as follows:

\begin{Def} If ${{A}} \in \mathbb{F}({\mathcal{B}})$ (the operators of finite rank), define the ideal ${\mathbf{N}}_1 ({\mathcal{B}})$
by:
\[
{\mathbf{N}}_1 ({\mathcal{B}}) = \left\{ {{{A}} \in \mathbb{F}({\mathcal{B}})\;\left| {\;{\mathbf{N}}_1 ({{A}}) < \infty } \right.} \right\},
\] 
where
\[
{\mathbf{N}}_1 ({{A}}) = \operatorname{glb} \left\{ {\sum\nolimits_{n = 1}^m {\left\| {f_n } \right\|\left\| {\phi _n } \right\|} \;\left| {f_n  \in {\mathcal{B}'},\;\phi _n  \in {\mathcal{B}},\;{{A}} = \sum\nolimits_{n = 1}^m {\phi _n \left\langle { \cdot \;,\,f_n } \right\rangle } } \right.} \right\}
\]
and the greatest lower bound is over all possible representations for ${{A}}$.
\end{Def}
Grothendieck showed that ${\mathbf{N}}_1 ({\mathcal{B}})$ is the completion of the finite rank operators and is a Banach space with norm ${\mathbf{N}}_1 ( \cdot )$.  It is also a two-sided ideal in $\mathbb{K}({\mathcal{B}})$.   It is easy to show that:

\begin{cor} $\mathbb{M}({\mathcal{B}}),\mathbb{N}({\mathcal{B}})$ and ${\mathbf{N}}_1 ({\mathcal{B}})$ are two-sided *ideals.
\end{cor}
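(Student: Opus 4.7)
The plan is to verify that each of the three classes is closed under the adjoint operation, since the two-sided ideal property has already been recorded as classical in the paragraph immediately preceding the corollary. Throughout, I shall use the explicit formula from the proof of Theorem~2.4, namely $A^{*}={\bf J}_{\mcB}^{-1}A'{\bf J}_{\mcB}$, where $A'$ is the Banach-space dual operator and ${\bf J}_{\mcB}:\mcB\to\mcB'$ is a continuous bijection (by Theorem~2.3) whose inverse is continuous by the Open Mapping Theorem. Hence left or right composition with ${\bf J}_{\mcB}^{\pm 1}$ preserves all the operator-ideal properties we care about.

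For $\mathbb{M}(\mcB)$, I would invoke Gantmacher's theorem: a bounded operator is weakly compact iff its Banach dual is weakly compact. Thus if $A\in\mathbb{M}(\mcB)$, then $A'\in\mathbb{M}(\mcB')$, and using the ideal property of $\mathbb{M}$ together with the boundedness of ${\bf J}_{\mcB}^{\pm 1}$ yields $A^{*}={\bf J}_{\mcB}^{-1}A'{\bf J}_{\mcB}\in\mathbb{M}(\mcB)$. For $\mathbb{N}(\mcB)$, I would first observe that since $\mcB$ is uniformly convex (hence reflexive), the Dunford--Schwartz dichotomy quoted in Section~4.1 gives $\mathbb{N}(\mcB)=\mathbb{K}(\mcB)$, so it suffices to prove $A\in\mathbb{K}(\mcB)\Rightarrow A^{*}\in\mathbb{K}(\mcB)$. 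This follows from Schauder's theorem ($A$ compact iff $A'$ compact), applied to the same conjugation $A^{*}={\bf J}_{\mcB}^{-1}A'{\bf J}_{\mcB}$.

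For ${\bf N}_{1}(\mcB)$, I would argue directly from the nuclear representation. Given $A=\sum_{n}\phi_{n}\langle\cdot\,,f_{n}\rangle$ with $\phi_{n}\in\mcB$, $f_{n}\in\mcB'$, and $\sum_{n}\|\phi_{n}\|_{\mcB}\|f_{n}\|_{\mcB'}<\infty$, a short computation gives $A'(g)=\sum_{n}\langle\phi_{n},g\rangle f_{n}$ on $\mcB'$. Applying ${\bf J}_{\mcB}^{-1}$ on the left and ${\bf J}_{\mcB}$ on the right produces
\[
A^{*}(u)\;=\;\sum_{n}\bigl\langle\phi_{n},{\bf J}_{\mcB}(u)\bigr\rangle\,{\bf J}_{\mcB}^{-1}(f_{n})\;=\;\sum_{n}\psi_{n}\,\langle u,g_{n}\rangle,
\]
where $\psi_{n}={\bf J}_{\mcB}^{-1}(f_{n})\in\mcB$ and $g_{n}\in\mcB'$ is the functional obtained by rewriting $\langle\phi_{n},{\bf J}_{\mcB}(\cdot)\rangle=(\phi_{n},\cdot)_{\mcH}$ in terms of the duality pairing via ${\bf J}_{\mcB}$. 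The nuclear norm bound then reads ${\bf N}_{1}(A^{*})\le\|{\bf J}_{\mcB}\|\cdot\|{\bf J}_{\mcB}^{-1}\|\cdot{\bf N}_{1}(A)<\infty$, so $A^{*}\in{\bf N}_{1}(\mcB)$.

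The main obstacle is the last step: one must carefully verify that the pairing $\langle\phi_{n},{\bf J}_{\mcB}(u)\rangle$ genuinely gives a bounded linear functional of $u\in\mcB$ of the correct form so that the resulting series is a legitimate nuclear representation on $\mcB$ rather than merely a bounded sum. This bookkeeping is slightly delicate because ${\bf J}_{\mcB}$, being built from an inner product, intertwines linearity with the conjugation convention on $\mcH$; once this is tracked carefully (using the continuity of ${\bf J}_{\mcB}$ and ${\bf J}_{\mcB}^{-1}$ between $\mcB$ and $\mcB'$), the rest is routine. The verifications for $\mathbb{M}$ and $\mathbb{N}$ are comparatively immediate consequences of classical duality results.
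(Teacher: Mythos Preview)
Your argument is correct. The paper itself offers no proof of this corollary beyond the phrase ``It is easy to show that:'', so there is no competing approach to compare against; your route via the conjugation formula $A^{*}={\bf J}_{\mcB}^{-1}A'{\bf J}_{\mcB}$ together with the classical duality theorems (Gantmacher, Schauder) and a direct nuclear computation is exactly the kind of verification the authors are leaving to the reader.

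Two minor remarks. First, for $\mathbb{M}(\mcB)$ you are working harder than necessary: the paper already records in Section~4.1 that $\mathbb{M}(\mcB)=L[\mcB]$ whenever $\mcB$ is reflexive, so closure under $*$ is immediate and Gantmacher is not needed. Second, the conjugation bookkeeping you flag in the ${\bf N}_{1}$ case is real: when you pull the scalar $\langle\phi_{n},{\bf J}_{\mcB}(u)\rangle$ through the conjugate-linear map ${\bf J}_{\mcB}^{-1}$ a complex conjugate appears, and the clean nuclear representation of $A^{*}$ is
\[
A^{*}(u)=\sum_{n}\bigl\langle u,{\bf J}_{\mcB}(\phi_{n})\bigr\rangle\,{\bf J}_{\mcB}^{-1}(f_{n}),
\]
from which your norm estimate ${\bf N}_{1}(A^{*})\le\|{\bf J}_{\mcB}\|\,\|{\bf J}_{\mcB}^{-1}\|\,{\bf N}_{1}(A)$ follows directly. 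With that correction in place your outline is complete.
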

In order to compensate for the (apparent) lack of an adjoint for Banach spaces, Pietsch \cite{PI2}, \cite{PI3}  defined a number of classes of operator ideals for a given ${\mathcal{B}}$.  Of particular importance for our discussion is the class $\mathbb{C}_p ({\mathcal{B}})$, defined by
\[
\mathbb{C}_p ({\mathcal{B}}) = \left\{ {{{A}} \in \mathbb{K}({\mathcal{B}})\;\left| {\,\mathbb{C}_p ({{A}}) = \sum\nolimits_{i = 1}^\infty  {[s_i ({{A}})]^p }  < \infty } \right.} \right\},
\]
where the singular numbers $s_n ({{A}})$ are defined by:
\[
s_n ({{A}}) = \inf \left\{ {\left\| {{{A}} - {{K}}} \right\|_{\mcB} \; \left| \ {{\text{rank of }}{{K}} \leqslant n} \right.} \right\}.
\]
Pietsch has shown that, $\mathbb{C}_1 ({\mathcal{B}}) \subset {\mathbf{N}}_1 ({\mathcal{B}})$, while Johnson et al \cite{JKMR} have shown that for each ${{A}} \in \mathbb{C}_1 ({\mathcal{B}})$, $\sum\nolimits_{n = 1}^\infty  {\left| {\lambda _n ({{A}})} \right|}  < \infty $.  On the other hand, Grothendieck \cite{GR} has provided an example of an operator ${{A}}$ in ${\mathbf{N}}_1 (L^\infty  [0,1])$ with $\sum\nolimits_{n = 1}^\infty  {\left| {\lambda _n ({{A}})} \right|}  = \infty $ (see Simon \cite{SI1}, pg. 118).   Thus, it follows that, in general, the containment is strict.  It is known that, if $\mathbb{C}_1 (\mathcal{B}) = {\mathbf{N}}_1 (\mathcal{B})$, then $\mathcal{B}$ is isomorphic to a Hilbert space (see Johnson et al).  It is clear from the above discussion, that: 
\begin{cor} $\mathbb{C}_p ({\mathcal{B}})$ is a two-sided *ideal in $\mathbb{K}({\mathcal{B}})$, and $\mathbb{S}_1 ({\mathcal{B}}) \subset {\mathbf{N}}_1 ({\mathcal{B}})$.
\end{cor}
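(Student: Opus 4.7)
The corollary has two clauses which I would address separately. The two-sided ideal structure of $\mathbb{C}_p(\mcB)$ inside $\mathbb{K}(\mcB)$ is classical Pietsch theory: the approximation-number inequalities $s_{m+n-1}(A+B)\le s_m(A)+s_n(B)$ and $s_n(BA),\,s_n(AB)\le \|B\|_\mcB\, s_n(A)$ (see \cite{PI1,PI2,PI3}) already give that, so the genuinely new content of the first clause is $*$-invariance under the adjoint built in Theorem~\ref{5: adj}. For the second clause, the inclusion $\mathbb{C}_1(\mcB)\subset \mathbf{N}_1(\mcB)$ of Pietsch cited just above the corollary reduces matters to proving $\mathbb{S}_1(\mcB)\subset \mathbb{C}_1(\mcB)$.

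For $*$-invariance I would exploit the explicit formula $A^*=\mathbf{J}_\mcB^{-1}A'\mathbf{J}_\mcB$ from the proof of Theorem~\ref{5: adj}. Inequality \eqref{2: one} shows $\mathbf{J}_\mcB:\mcB\to\mcB'$ is bounded; it is injective because $\mathbf{J}_\mcB(u)=0$ forces $(v,u)_\mcH=0$ for every $v\in\mcB$, whence $u=0$ by the density of $\mcB$ in $\mcH$, and surjective by Theorem~\ref{5: adj}. Hence $\mathbf{J}_\mcB$ is a bounded linear bijection and $\mathbf{J}_\mcB^{-1}$ is bounded by the open mapping theorem. Setting $C:=\|\mathbf{J}_\mcB\|\cdot\|\mathbf{J}_\mcB^{-1}\|$, for any $K$ of rank at most $n$ on $\mcB$ the operator $K^*:=\mathbf{J}_\mcB^{-1}K'\mathbf{J}_\mcB$ also has rank at most $n$ (since $\mathrm{rank}(K')=\mathrm{rank}(K)$), and
\[
\|A^*-K^*\|_\mcB \;\le\; \|\mathbf{J}_\mcB^{-1}\|\cdot\|(A-K)'\|_{\mcB'}\cdot\|\mathbf{J}_\mcB\| \;\le\; C\,\|A-K\|_\mcB.
\]
Taking the infimum over finite-rank $K$ yields $s_n(A^*)\le C\,s_n(A)$, so $A\in\mathbb{C}_p(\mcB)\Rightarrow A^*\in\mathbb{C}_p(\mcB)$.

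To obtain $\mathbb{S}_1(\mcB)\subset \mathbb{C}_1(\mcB)$, I would start from the Schmidt-type expansion $A=\sum_n \mu_n(A)\langle\cdot,\varphi_n^*\rangle U\varphi_n$ given just before Theorem~4.9, for which $\sum_n \mu_n(A)=\|A\|_1^\mcB<\infty$ by that theorem. The partial sums $A_N$ have rank at most $N$, and the goal is a tail bound of the form $\|A-A_N\|_\mcB\le c\,\mu_{N+1}(\bar A)$, which would give $\sum_N s_N(A)\le c\sum_n \mu_n(\bar A)<\infty$. The plan is to realize $A_N$ as $A\bar P_N|_\mcB$, where $\bar P_N$ is the spectral projection onto the top $N$ eigenvectors of the compact, naturally self-adjoint operator $A^*A$; by Lax's Theorem (Theorem~1.2) this projection has a bounded restriction to $\mcB$, and the biorthonormal system of Theorem~4.4 allows one to write $A(I-\bar P_N)$ in a form where the remainder is dictated by the first omitted Hilbert-space singular value. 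The main obstacle is exactly this transfer: since $\mcB\hookrightarrow\mcH$ continuously, the $\mcB$-vector norm is stronger than the $\mcH$-vector norm, and for operator norms the two are not directly comparable; thus lifting the Hilbert-space Schmidt tail estimate $\|\bar A - \bar A\bar P_N\|_\mcH=\mu_{N+1}(\bar A)$ to $\|A-AP_N\|_\mcB$ requires combining the self-adjointness of $A^*A$ with Lax's bound $\|T\|_\mcH\le k\|T\|_\mcB$ (available only for symmetric $T$) and pushing it back through the polar-type factorization $A=U|A|$ to control $A(I-P_N)$ in the $\mcB$-operator norm, which is the delicate step of the argument.
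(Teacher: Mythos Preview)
The paper gives no proof of this corollary at all: it is stated as following ``from the above discussion'', and the containment $\mathbb{S}_p(\mathcal{B})\subseteq\mathbb{C}_p(\mathcal{B})$ is asserted parenthetically as ``clear'' in the sentence immediately after. So your proposal is not so much to be compared with the paper's argument as to supply one.

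Your treatment of the $*$-invariance of $\mathbb{C}_p(\mathcal{B})$ is correct and considerably more explicit than anything the paper offers. The inequality $s_n(A^*)\le C\,s_n(A)$ via $A^*=\mathbf{J}_\mathcal{B}^{-1}A'\mathbf{J}_\mathcal{B}$, together with $\operatorname{rank}(K')=\operatorname{rank}(K)$ and the open mapping theorem applied to $\mathbf{J}_\mathcal{B}$, is a clean and complete argument; the paper simply takes this for granted.

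For $\mathbb{S}_1(\mathcal{B})\subset\mathbf{N}_1(\mathcal{B})$ you follow exactly the route the paper has in mind, namely to pass through Pietsch's inclusion $\mathbb{C}_1(\mathcal{B})\subset\mathbf{N}_1(\mathcal{B})$ and then establish $\mathbb{S}_1(\mathcal{B})\subset\mathbb{C}_1(\mathcal{B})$. You are right to flag this last step as the nontrivial one: transferring the Hilbert-space tail estimate $\|\bar A-\bar A\bar P_N\|_{\mathcal{H}}=\mu_{N+1}(\bar A)$ back to a $\mathcal{B}$-operator-norm bound is exactly where the paper's ``clearly'' hides the work, and your outline (Lax plus the polar factorization) is a reasonable plan but, as you acknowledge, not yet a proof. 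One alternative worth noting is to bypass $\mathbb{C}_1$ entirely and argue $\mathbb{S}_1(\mathcal{B})\subset\mathbf{N}_1(\mathcal{B})$ directly from the Schmidt expansion $A=\sum_n\mu_n(A)\langle\,\cdot\,,\varphi_n^*\rangle U\varphi_n$ viewed as a nuclear representation; this shifts the difficulty to obtaining a uniform bound on $\|\varphi_n^*\|_{\mathcal{B}'}\|U\varphi_n\|_{\mathcal{B}}$, which is of the same character but may be more tractable than the operator-norm tail estimate.
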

For a given Banach space, it is not clear how the spaces $\mathbb{C}_p ({\mathcal{B}})$ of Pietsch relate to our Schatten Classes $\mathbb{S}_p ({\mathcal{B}})$ (clearly $\mathbb{S}_p ({\mathcal{B}}) \subseteq \mathbb{C}_p ({\mathcal{B}})$).  Thus, one question is that of the equality of $\mathbb{S}_p ({\mathcal{B}})$ and $\mathbb{C}_p ({\mathcal{B}})$.  (We suspect that  $\mathbb{S}_1 ({\mathcal{B}})=\mathbb{C}_1 ({\mathcal{B}})$.)
\section{Conclusion}The most interesting aspect of this paper is the observation that the dual space of a Banach space can have more then one representation.  It is well-known that a given Banach space $\mcB$, can have many equivalent norms that generate the same topology.  However, the geometric properties of the space depend on the norm used.  We have shown that the properties of the linear operators on $\mcB$ depend on the family of linear functionals used to represent the dual space $\mcB'$.  This approach offers a interesting tool for a closer study of the structure of bounded linear operators on $\mcB$. 
\bibliographystyle{amsalpha}

\end{document}